\newtheorem{theorem}{Theorem}[section]
\newtheorem{lemma}[theorem]{Lemma}
\newtheorem{corollary}[theorem]{Corollary}
\newtheorem{proposition}[theorem]{Proposition}
\newtheorem{remark}[theorem]{Remark}
\newcommand{\beqa}{\begin{eqnarray*}}
\newcommand{\eeqa}{\end{eqnarray*}}
\DeclareMathOperator*{\supp}{supp}
\newcommand{\field}[1]{\mathbb{#1}}
\newcommand{\bR}{\field{R}}        
\newcommand{\bN}{\field{N}}        
\newcommand{\bZ}{\field{Z}}        
\newcommand{\bC}{\field{C}}        
\def\cS{\mathcal{S}}
\def\rd{\bR^d}
\def\rdd{{\bR^{2d}}}
\def\R{\right)}
\def\<{\left<}
\def\>{\right>}
\def\mv1{M_v^1}
\def\mn{(m,n)}
\def\mn'{(v_1,u_2')}
\def\R{\mathbb{R}}
\def\Ren{\mathbb{R}^d}
\def\Fur{\mathcal{F}}
\def\Sn2{S_{2}(L^{2}(\Ren))}
\def\S1{S_{1}(L^{2}(\Ren))}
\def\sig00{\sigma_{0,0}}
\begin{document}
\begin{abstract} We study the propagation of singularities for semilinear Schr\"odinger equations with quadratic Hamiltonians, in particular for the semilinear harmonic oscillator. We show that the propagation still occurs along the flow the Hamiltonian flow, but for Sobolev regularities in a certain range and provided the notion of Sobolev-wave front set is conveniently modified. The proof makes use of a weighted version of the paradifferential calculus, adapted to our situation.\par
The results can be regarded as the Schr\"odinger counterpart of those known for semilinear  hyperbolic equations, which hold with the usual wave front set.
\end{abstract}

\title{Propagation of singularities for semilinear Schr\"odinger equations}

\author{Fabio Nicola  and Luigi Rodino}
\address{Dipartimento di Scienze Matematiche,
Politecnico di Torino, corso Duca degli Abruzzi 24, 10129 Torino,
Italy}
\address{Dipartimento di Matematica,
Universit\`a di Torino, via Carlo Alberto 10, 10123 Torino, Italy}

\email{fabio.nicola@polito.it}
\email{luigi.rodino@unito.it}

\subjclass[2010]{35Q55, 35A21, 35S05, 35S50}
\keywords{Paradifferential calculus, Schr\"odinger equations, Global wave front set, Propagation of singularities}
\maketitle
\section{Introduction}
H\"ormander \cite{hormander0} in 1991 introduced a global version of the notion of wave front set for $u$ in the space $\cS'(\rd)$ of the temperate distributions, dual to the Schwartz space $\cS(\rd)$ of the rapidly decreasing functions. Such global wave front set, $WF_G(u)$, can be easily defined in terms of the Bargman transform
\begin{equation}\label{l1}
Tu(z)=2^{-d/2}\pi^{-3d/4}\int_{\rd} e^{-iy\xi} e^{-\frac{1}{2}|x-y|^2} u(y)\, dy,
\end{equation}
where we write $z=(x,\xi)$. Namely, for $z_0\in\rdd\setminus\{0\}$ one sets $z_0\not \in WF_G(u)$, if there exists a conic neighborhood of $z_0$ in $\rdd$ where
\begin{equation}\label{l2}
|z|^N|T u(z)|<C_N\quad \textrm{for every}\ N>0,
\end{equation}
for some constant $C_N>0$, cf.\ Rodino and Wahlberg \cite{rw}. To compare with other definitions of wave front set, consider for example in dimension $d=1$ the function $u(x)=e^{i\lambda x^2/2}$ with $\lambda\in\R\setminus\{0\}$, called ``chirp'' in time-frequency analysis. This function is smooth everywhere, but 
\begin{equation}\label{l3}
WF_G\big(e^{i\lambda x^2/2}\big)=\{z=(x,\xi):\ x\not=0,\ \xi=\lambda x\}
\end{equation}
is not empty. Note also that \[
WF_G(1)=\{z=(x,\xi):\ x\not=0,\ \xi=0\},\quad WF_G(\delta)=\{z=(x,\xi):\ x=0,\ \xi\not=0\}.
\]
The definition of H\"ormander \cite{hormander0} was addressed to the study of the hyperbolic equations with double characteristics, however as a byproduct of the results of \cite{hormander0}  one may also obtain propagation of singularities for the Schr\"odinger equation 
\begin{equation}\label{l4}
D_t u+a(x,D_x)u=0,
\end{equation}
where $D_t=-i\partial_t$, and $a(x,\xi)$ is a real-valued quadratic form in $\rdd$. Namely, considering the related Hamiltonian system and flow $\chi_t:\rdd\setminus\{0\}\to\rdd\setminus\{0\}$, we have for every $t\in\R$
\begin{equation}\label{l5}
WF_G(u(t))=\chi_t(WF_G(u(0))).
\end{equation}
For example, fix attention on the quantum harmonic oscillator
\begin{equation}\label{l6}
D_t u+\frac{1}{2}(-\Delta+|x|^2)u=0,
\end{equation}
for which 
\begin{equation}\label{l7}
\chi_t(y,\eta)=
\begin{pmatrix}
(\cos t)I& (\sin t) I\\
(-\sin t)I & (\cos t)I
\end{pmatrix}
\begin{pmatrix}
y\\ \eta
\end{pmatrix}.
\end{equation}
As a test, take in \eqref{l6} the initial datum $u(0)=1$, which gives in dimension $d=1$ the solution 
\[
u(t)=c(t)e^{-i(\tan t) x^2/2}\quad {\rm for}\ t\not=\frac{\pi}{2}+k\pi,\ k\in\bZ,
\]
with $|c(t)|=|\cos t|^{-1/2}$. These are chirp functions, to which we can apply \eqref{l3}. Since $u(\pi/2+k\pi)=c_k\delta$, $|c_k|=(2\pi)^{1/2}$, from \eqref{l7} we obtain indeed \eqref{l5}, i.e.\ singularities  move along circles in the $z=(x,\xi)$ plane.\par
 Such result of propagation was generalized in different directions but, to our knowledge, always for linear equations; see Nakamura \cite{nakamura}, Wunsch \cite{wunsch}, Hassel-Wunsch \cite{hassel-wunsch}, Ito \cite{ito} and in the analytic-Gevrey category Martinez-Nakamura-Sordoni \cite{mns}, Mizuhara \cite{mizuhara} and subsequent contributions. \par
 In the present paper we want to discuss the propagation of the global wave front set for semilinear Schr\"odinger equations of the form 
 \begin{equation}\label{l8}
 D_t u+a(t,x,D_x)u=F(u),
 \end{equation}
 where $a(t,x,D)$ is a family of pseudodifferential operators with real-valued symbol $a(t,x,\xi)$ in the class of Shubin \cite{shubin}, including as a particular case the real-valued quadratic forms in $z=(x,\xi)$. We assume $F\in C^\infty(\bC)$ with $F(0)=0$.\par
 It is quite clear that the linear propagation result is lost in the semilinear case. In fact, considering again the chirp function and applying \eqref{l3} to the square we obtain a new wave front set
 \begin{equation}\label{l9}
 WF_G(u^2)=WF_G(e^{i\lambda x^2})=\{z=(x,\xi):\ x\not=0,\ \xi=2\lambda x\}.
 \end{equation}
 Starting from this obvious remark it is easy to show the appearance of anomalous singularities for the equation \eqref{l8}, even in the case $F(u)=u^2$. The role of the Hamiltonian flow can be however restored by using microlocal arguments, introduced in the years '$80$'s for the study of nonlinear hyperbolic equations, see for example \cite{bony1,bony8,bony42}. The basic idea there was that the linear propagation keeps valid if we assume that the solution $u$ belongs to a Sobolev space $H^s$ with $s$ sufficiently large and, as essential hypothesis, we limit attention to the wave front set corresponding to the regularity $H^\sigma$ with $\sigma$ sufficiently small, namely $s<\sigma<2s-d/2$. \par
 In our context, by following the approach of Taylor \cite{taylor1,taylor2}, we shall prove a similar result in the Schr\"odinger case. We will have to replace the Sobolev spaces $H^s$ with the weighted Sobolev spaces $Q^s=H^s\cap \Fur H^s$, which are more fit when dealing with operators such as the harmonic oscillator (cf.\ \cite{shubin}). As a consequence, we will also need a weighted version of the paradifferential calculus, with a combination of Littlewood-Paley decompostions in the frequency domain and in phase space.\par
 In short, the statement will be the following.
 Let us say that $z_0\in\rdd\setminus\{0\}$ does not belong to $WF^s_G(u)$, $s\in\R$, if $|z|^s T u(z)\in L^2$ in a conic neighborhood of $z_0$. Let $\chi_t$ be the Hamiltonian flow corresponding to $a(t,x,\xi)$ in \eqref{l8}. Let $d/2<s\leq \sigma<2s-d/2$ and $u\in C([0,T];Q^s)$ be a solution of \eqref{l8}. Then $z_0\not\in WF_G^\sigma(u(0))$ implies $\chi_t(z_0)\not\in WF^s_G(u(t))$. \par
 We shall also provide a preliminary result of existence and uniqueness of the Cauchy problem in the $Q^s$ frame, to give a precise setting to the propagation statement. Let us address for example to Bourgain \cite{bourgain} and Tao \cite{tao} for a survey on results of local and global existence of low regular solutions. We emphasize, however, that our results apply to classical solutions (i.e. with Sobolev regularity $s>d/2$).\par
Returning to the example of the harmonic oscillator, we may conclude propagation as described before, with $\chi_t$ as in \eqref{l7}, for the equation
 \[
 D_t u+\frac{1}{2}(-\Delta+|x|^2)u=F(u)
 \]
 independently of the nonlinearity $F(u)$. \par\medskip
 The paper is organized as follows. In Section 2 we fix some notation and we prove some preliminary estimates for Littlewood-Paley decompositions in phase space. Section 3 is devoted to the micolocal mapping property of the nonlinearity $F(u)$ in weighted Sobolev spaces, via paradifferential techniques. Finally in Section 4 we consider the evolution problem and we prove the above mentioned propagation result.

 \section{Notation and preliminary estimates}
\subsection{Notation}  The Fourier transform is normalized as
\[
\Fur f(\xi)=\widehat{f}(\xi)=\int_{\rd}e^{-ix\xi}f(x)\, dx
\]
and the pseudodifferential operator with symbol $a(x,\xi)$ is accordingly defined as 
\[
a(x,D)u=(2\pi)^{-d}\int_{\rd} e^{ix\xi} a(x,\xi)\widehat{u}(\xi)\,d\xi.
\]
\subsection{Littlewood-Paley partitions of unity \cite{taylor1,taylor2}}
Let $\psi_k(\xi)$, $k\geq0$, be a Littlewood-Paley partition of unity, therefore $\psi_0\in C^\infty_0(\rd)$ is real-valued, $\psi_0(\xi)=1$ for $|\xi|\leq 1$ and $\psi_0(\xi)=0$ for $|\xi|\geq 2$, $\psi_k(\xi)=\psi_0(2^{-k}\xi)-\psi_0(2^{-k+1}\xi)$ for $k\geq1$. In particular we see that 
\[
{\supp}\,\psi_k\subset\{2^{k-2}\leq|\xi|\leq 2^k\}
\]
 for $k\geq 1$. \par We also set 
 \[
 \Psi_k(\xi)=\sum_{j=0}^k \psi_k(\xi)=\psi_0(2^{-k}\xi),\quad k\geq0. 
 \]
By $\phi_k(x,\xi)$, $k\geq0$, we denote a similar partition of unity {\it in phase space}, and we set \[
\Phi_k(x,\xi)=\sum_{j=0}^k \phi_k(x,\xi)=\phi_0(2^{-k}x,2^{-k}\xi),\quad k\geq0.
\]
For $r>0$ we consider the Zygmund class $C^r_\ast$ endowed with the norm
\[
\|f\|_{C^r_\ast}=\sup_{j\geq0}\, 2^{rj}\|\psi_j(D)f\|_{L^\infty}.
\]
Instead, the space $C^r$, $r\geq0$, stands for the space of H\"older continuous functions of order $r$, so that $C^r=C^r_\ast$ if $r$ is not an integer, whereas $C^r\subset C^r_\ast\subset L^\infty$ if $r\in\mathbb{N}$. 
We recall from \cite[Lemma 1.3C]{taylor1} the following two lemmas.
\begin{lemma}\label{lemmachain}
Let $r>0$. There exists a constant $C>0$ such that 
\begin{equation}\label{chain}
\|g(h)\|_{C^r_\ast}\leq C \|g\|_{C^N}[1+\|h\|_{L^\infty}^N](\|h\|_{C^r_\ast}+1)]
\end{equation}
for every $g\in C^\infty$, $h\in C^r_\ast$.
\end{lemma}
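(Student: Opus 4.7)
The plan is to prove the estimate by a telescoping Littlewood--Paley decomposition applied to $g(h)$ and then by reducing the $C^r_\ast$ norm of $g(h)$ to an estimate on each spectral block. Concretely, set $h_k=\Psi_k(D)h$ and write
\[
g(h)=g(0)+\sum_{k\geq 0}\bigl(g(h_{k+1})-g(h_k)\bigr),
\]
and apply the fundamental theorem of calculus to get $g(h_{k+1})-g(h_k)=G_k\cdot\psi_{k+1}(D)h$, with
\[
G_k(x)=\int_0^1 g'\bigl(h_k(x)+t\,\psi_{k+1}(D)h(x)\bigr)\,dt.
\]
Since the Zygmund norm is controlled by $\sup_j 2^{rj}\|\psi_j(D)g(h)\|_{L^\infty}$, the goal is to estimate $\|\psi_j(D)[G_k\,\psi_{k+1}(D)h]\|_{L^\infty}$ and then sum over $k$.

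Next I would gather the two key ingredients. First, from the definition of $C^r_\ast$,
\[
\|\psi_{k+1}(D)h\|_{L^\infty}\leq C\,2^{-r(k+1)}\|h\|_{C^r_\ast}.
\]
Second, since $h_k$ is spectrally localized in $|\xi|\lesssim 2^k$, Bernstein's inequality gives $\|\partial^\alpha h_k\|_{L^\infty}\lesssim 2^{k|\alpha|}\|h\|_{L^\infty}$, and the same bound holds for $\psi_{k+1}(D)h$ up to a constant. By the Faà di Bruno formula applied to $G_k$, each derivative $\partial^\alpha G_k$ is a polynomial (of degree $|\alpha|$) in the derivatives $\partial^\beta h_k$, $\partial^\beta\psi_{k+1}(D)h$ (with $|\beta|\leq|\alpha|$), multiplied by values of $g^{(m)}$ evaluated at points of modulus $\leq \|h\|_{L^\infty}$ with $m\leq|\alpha|+1$. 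Hence
\[
\|\partial^\alpha G_k\|_{L^\infty}\leq C_\alpha\,\|g\|_{C^{|\alpha|+1}}\bigl(1+\|h\|_{L^\infty}^{|\alpha|}\bigr)\,2^{k|\alpha|}.
\]

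The heart of the matter is estimating $\psi_j(D)[G_k\,\psi_{k+1}(D)h]$ in $L^\infty$ in the two regimes $k\geq j-C$ and $k<j-C$. In the high-frequency regime $k\geq j-C$, I would use the crude bound $\|G_k\|_{L^\infty}\leq \|g\|_{C^1}$ (with a factor $1+\|h\|_{L^\infty}$ absorbed into the first derivative estimate) together with the $C^r_\ast$ bound on $\psi_{k+1}(D)h$; summing yields $2^{-rj}\|h\|_{C^r_\ast}$. In the low-frequency regime $k<j-C$, the term $G_k\,\psi_{k+1}(D)h$ has no spectral concentration at $2^j$, so one integrates by parts: writing $\psi_j(D)=2^{-jN}\psi_j(D)\bigl(\sum|D_x|^N\bigr)^{-1}$-type argument, one distributes $N$ derivatives, losing $2^{Nk}$ from $\partial^\alpha G_k$ and $2^{N(k-j)}$ overall from the mismatch; choosing $N>r$ makes the sum $\sum_{k<j-C}$ converge and dominated by $2^{-rj}\|h\|_{C^r_\ast}$ times $\|g\|_{C^{N+1}}(1+\|h\|_{L^\infty}^N)$.

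Putting these two estimates together gives $2^{rj}\|\psi_j(D)g(h)\|_{L^\infty}\leq C\|g\|_{C^N}(1+\|h\|_{L^\infty}^N)(\|h\|_{C^r_\ast}+1)$, uniformly in $j$, which is the desired bound (the additive $+1$ handles the $g(0)$ term and the low-index $\psi_0$ piece). The main technical obstacle I expect is the careful bookkeeping in the low-frequency regime $k<j-C$: one must exploit the spectral localization of $h_k$ and $\psi_{k+1}(D)h$ to create the factor $2^{-N(j-k)}$ needed to overcome the growth $2^{Nk}$ produced by derivatives of $G_k$, and simultaneously keep track of the polynomial dependence on $\|h\|_{L^\infty}$ coming from Faà di Bruno, so that only the $C^N$-norm of $g$ on the compact interval $[-\|h\|_{L^\infty},\|h\|_{L^\infty}]$ is used.
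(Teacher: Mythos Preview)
The paper does not give its own proof of this lemma: it simply recalls it from Taylor's book, citing \cite[Lemma 1.3C]{taylor1}. So there is no ``paper's proof'' to compare against.

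Your outline is essentially the standard argument for Moser-type estimates in the Zygmund scale and is correct in substance. The telescoping identity, the mean-value representation $g(h_{k+1})-g(h_k)=G_k\,\psi_{k+1}(D)h$, the Fa\`a di Bruno bound $\|\partial^\alpha G_k\|_{L^\infty}\lesssim \|g\|_{C^{|\alpha|+1}}(1+\|h\|_{L^\infty}^{|\alpha|})\,2^{k|\alpha|}$ via Bernstein, and the split into $k\ge j-C$ and $k<j-C$ are all the right moves. One phrasing is misleading: in the low-frequency regime you write that $G_k\,\psi_{k+1}(D)h$ ``has no spectral concentration at $2^j$'', but $G_k$ is a nonlinear function of $h_k$ and its spectrum is \emph{not} localized. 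What actually drives the gain $2^{-N(j-k)}$ is exactly what you say next: the derivative bounds $\|\partial^\alpha(G_k\,\psi_{k+1}(D)h)\|_{L^\infty}\lesssim 2^{k|\alpha|-rk}$ combined with $\|\psi_j(D)f\|_{L^\infty}\lesssim 2^{-jN}\|\partial^N f\|_{L^\infty}$ (since $\psi_j$ is supported where $|\xi|\sim 2^j$). With $N>r$ the geometric sum over $k<j-C$ closes. The integer $N$ in the final estimate is then any integer $>r+1$, which matches the statement.
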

\begin{lemma}\label{lemmat1}
Let $\psi\in C^\infty_0(\rd)$, $\psi(\xi)=1$ for $|\xi|\leq 1$ and $r>0$. Then the following estimates hold uniformly with respect to $0<\epsilon\leq1$:
\begin{equation}\label{lt1}
\|\psi(\epsilon D)f\|_{L^\infty}\lesssim\|f\|_{L^\infty}
\end{equation}
\begin{equation}\label{lt2}
\|\partial^\beta \psi(\epsilon D)f\|_{L^\infty}\lesssim\begin{cases}
\|f\|_{C^r}& |\beta|\leq r\\
\epsilon^{-(|\beta|-r)}\|f\|_{C^r_\ast}& |\beta|>r
\end{cases}
\end{equation}
\begin{equation}\label{lt3}
\|(I-\psi(\epsilon D))f\|_{L^\infty}\lesssim\epsilon^r\|f\|_{C^r_\ast}.
\end{equation}
\end{lemma}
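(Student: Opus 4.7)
The plan is to treat the three estimates separately, relying on the Littlewood–Paley decomposition $f = \sum_{j\geq0} \psi_j(D)f$ and the basic scaling properties of $\psi(\epsilon D)$. Writing $\psi(\epsilon D)f = K_\epsilon \ast f$ with $K_\epsilon(x) = \epsilon^{-d}\check\psi(x/\epsilon)$, Young's inequality gives \eqref{lt1} at once, because $\|K_\epsilon\|_{L^1} = \|\check\psi\|_{L^1}$ is independent of $\epsilon$ and $\check\psi$ lies in $\cS$. The same remark applied to $\epsilon^{|\beta|}\partial^\beta K_\epsilon$ shows that $\partial^\beta\psi(\epsilon D)$ is an $L^\infty$--multiplier of norm $O(\epsilon^{-|\beta|})$; this is the crude bound I will refine.

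For the first branch of \eqref{lt2}, i.e.\ $|\beta|\leq r$, I would simply commute derivatives and the Fourier multiplier:
\[
\partial^\beta\psi(\epsilon D)f=\psi(\epsilon D)\partial^\beta f,
\]
and then apply \eqref{lt1} to $\partial^\beta f\in L^\infty$, whose norm is controlled by $\|f\|_{C^r}$ since $|\beta|\leq r$. For the second branch, $|\beta|>r$, I would decompose $f=\sum_{j\geq0}\psi_j(D)f$ and observe that $\psi(\epsilon\xi)$ is supported in $|\xi|\lesssim \epsilon^{-1}$, so only the indices $j$ with $2^j\lesssim \epsilon^{-1}$ contribute. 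For each such $j$, a Bernstein-type estimate gives
\[
\|\partial^\beta\psi(\epsilon D)\psi_j(D)f\|_{L^\infty}\lesssim 2^{j|\beta|}\|\psi_j(D)f\|_{L^\infty}\lesssim 2^{j(|\beta|-r)}\|f\|_{C^r_\ast}.
\]
Since $|\beta|-r>0$ the geometric sum over $2^j\lesssim\epsilon^{-1}$ is dominated by its last term, yielding the claimed $\epsilon^{-(|\beta|-r)}$ factor.

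Estimate \eqref{lt3} is dual in nature. Again I decompose $f=\sum_j\psi_j(D)f$; because $\psi(\epsilon\xi)=1$ on $|\xi|\leq\epsilon^{-1}$, the operator $I-\psi(\epsilon D)$ annihilates $\psi_j(D)f$ for $2^j\ll\epsilon^{-1}$, so only the tail $2^j\gtrsim\epsilon^{-1}$ survives. On that range $\|(I-\psi(\epsilon D))\psi_j(D)f\|_{L^\infty}\lesssim\|\psi_j(D)f\|_{L^\infty}\lesssim 2^{-jr}\|f\|_{C^r_\ast}$, and summing the geometric series gives the factor $\epsilon^r$.

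The arguments are routine Littlewood–Paley bookkeeping; the only delicate point is that $r$ may be non-integer and, when $r$ is an integer, $C^r_\ast$ is strictly larger than $C^r$. Hence in \eqref{lt2} it is important to distinguish the two regimes, using $C^r$ only when $|\beta|\leq r$ (so that $\partial^\beta f$ has an honest $L^\infty$ meaning) and switching to the Zygmund norm $C^r_\ast$ as soon as one must sum dyadic blocks, since only the latter is controlled by $\sup_j 2^{jr}\|\psi_j(D)f\|_{L^\infty}$. The rest of the proof is merely a matter of tracking the geometric series, which I would not write out in full.
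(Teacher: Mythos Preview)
The paper does not actually prove this lemma; it simply recalls it from Taylor \cite[Lemma 1.3C]{taylor1} without argument. Your proof is correct and is essentially the standard one: the convolution-kernel bound for \eqref{lt1}, commuting derivatives through the multiplier for the $|\beta|\le r$ case of \eqref{lt2}, and Littlewood--Paley plus Bernstein together with geometric summation for the $|\beta|>r$ case of \eqref{lt2} and for \eqref{lt3}. This is exactly the argument Taylor gives, so there is nothing to compare.
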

We also need the following estimates for phase space localizations. 
\begin{lemma}\label{lemmat2}
Let $\phi\in C^\infty_0(\rdd)$. We have the following estimates, uniformly with respect to $0<\epsilon\leq1$:
\begin{equation}\label{lt4}
\|\phi(\epsilon x,\epsilon D) u\|_{L^\infty}\lesssim \|u\|_{L^\infty},
\end{equation}
\begin{equation}\label{lt5}
\|\partial^\beta_x \phi(\epsilon x,\epsilon D) u\|_{C^r_\ast}\lesssim \epsilon^{-|\beta|}\|u\|_{C^r_\ast},\quad r>0,\ \beta\in\bN^d.
\end{equation}
\end{lemma}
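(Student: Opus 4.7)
The bound \eqref{lt4} is a kernel estimate. I will compute the Schwartz kernel of $\phi(\epsilon x,\epsilon D)$ explicitly: with the change of variables $\eta=\epsilon\xi$ one gets
\[
K_\epsilon(x,y)=\epsilon^{-d}g(\epsilon x,(x-y)/\epsilon),\qquad g(z,w):=(2\pi)^{-d}\int_{\rd} e^{iw\eta}\phi(z,\eta)\,d\eta,
\]
and since $\phi\in C_0^\infty(\rdd)$, we have $g\in\mathcal S(\rdd)$. The substitution $w=(x-y)/\epsilon$ gives $\sup_x\int|K_\epsilon(x,y)|\,dy\le\sup_{z\in\rd}\int|g(z,w)|\,dw<\infty$, uniformly in $\epsilon$, and similarly for $\sup_y\int|K_\epsilon|\,dx$. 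Schur's test then yields \eqref{lt4}.

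For \eqref{lt5} I would first reduce to an $\epsilon$-uniform $C^r_\ast$-mapping property by commuting $\partial_x$ through the operator. Since
\[
\partial_{x_j}\bigl[\phi(\epsilon x,\epsilon D)u\bigr]=\epsilon^{-1}\Phi^{(1)}_j(\epsilon x,\epsilon D)u+\epsilon\,\Phi^{(2)}_j(\epsilon x,\epsilon D)u,
\]
with $\Phi^{(1)}_j(z,\eta):=i\eta_j\phi(z,\eta)$ and $\Phi^{(2)}_j(z,\eta):=\partial_{z_j}\phi(z,\eta)$ both in $C_0^\infty(\rdd)$, iterating $|\beta|$ times gives
\[
\partial^\beta_x\bigl[\phi(\epsilon x,\epsilon D)u\bigr]=\sum_k \epsilon^{m_k}\,\Phi_k(\epsilon x,\epsilon D)u,\qquad \Phi_k\in C_0^\infty(\rdd),
\]
where $m_k\ge -|\beta|$, the minimum being attained when each of the $|\beta|$ commutations contributes an $\epsilon^{-1}$-factor. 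Hence \eqref{lt5} follows once we establish the auxiliary uniform bound
\begin{equation}\label{auxC}
\|\Phi(\epsilon x,\epsilon D)v\|_{C^r_\ast}\le C_\Phi\|v\|_{C^r_\ast},\qquad \Phi\in C_0^\infty(\rdd),\ 0<\epsilon\le1.
\end{equation}

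To prove \eqref{auxC} I expand $\Phi$ in a rapidly convergent tensor-product series (for instance by a Fourier series inside a box containing $\supp\Phi$, multiplied by a fixed cut-off),
\[
\Phi(z,\eta)=\sum_n c_n\,\varphi_n(z)\,\widetilde{\psi}_n(\eta),
\]
with $(c_n)$ of rapid decay and $\varphi_n,\widetilde{\psi}_n\in C_0^\infty(\rd)$ having uniformly controlled seminorms. Then $\Phi(\epsilon x,\epsilon D)v=\sum_n c_n\,\varphi_n(\epsilon x)\,\widetilde{\psi}_n(\epsilon D)v$, and I bound each factor separately: the Fourier multiplier $\widetilde{\psi}_n(\epsilon D)$ is convolution with $\epsilon^{-d}(\widetilde{\psi}_n)^{\vee}(\cdot/\epsilon)$, whose $L^1$ norm is independent of $\epsilon$, and since it commutes with $\psi_j(D)$ it is uniformly bounded on $C^r_\ast$; multiplication by $\varphi_n(\epsilon x)$ is uniformly bounded on $C^r_\ast$ because $\|\partial^\alpha[\varphi_n(\epsilon\,\cdot)]\|_{L^\infty}=\epsilon^{|\alpha|}\|\partial^\alpha\varphi_n\|_{L^\infty}\le\|\partial^\alpha\varphi_n\|_{L^\infty}$, and a smooth function with uniformly bounded derivatives acts continuously on $C^r_\ast$ (a standard paradifferential fact, provable e.g.\ via the Bony decomposition). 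Summing the rapidly convergent series yields \eqref{auxC}, and combining with the reduction above gives \eqref{lt5} with the exponent $\epsilon^{-|\beta|}$.

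The main obstacle is the $C^r_\ast$-boundedness in \eqref{auxC}, since the symbol $\phi(\epsilon x,\epsilon\xi)$ only belongs to the degenerate class $S^0_{0,0}$, for which $C^r_\ast$-boundedness is not automatic; the dilation-invariant compactly supported structure of the symbol is what permits the tensor-product reduction to the two easy facts above.
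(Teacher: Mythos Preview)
Your argument is correct, and for \eqref{lt4} it coincides with the paper's: both compute the kernel $K_\epsilon(x,y)=\epsilon^{-d}(\cF_2^{-1}\phi)(\epsilon x,\epsilon^{-1}(x-y))$ and bound $\sup_x\int|K_\epsilon(x,y)|\,dy$ uniformly in $\epsilon$. Your reduction of \eqref{lt5} to the case $\beta=0$ via the Leibniz/commutation identity is also the same as the paper's.

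The only genuine difference is how you handle the $\beta=0$ case, i.e.\ your auxiliary bound \eqref{auxC}. The paper does this in one line: the family $\{\phi(\epsilon x,\epsilon D):0<\epsilon\le1\}$ is bounded in $OPS^0_{1,0}$, and operators in this class act uniformly on $C^r_\ast$. Your tensor-product/Fourier-series argument is a valid workaround, but it rests on a misconception. You write that $\phi(\epsilon x,\epsilon\xi)$ ``only belongs to the degenerate class $S^0_{0,0}$''; in fact, since $\phi\in C^\infty_0(\rdd)$ is supported in, say, $|z|+|\eta|\le R$, on the support of $\phi(\epsilon x,\epsilon\xi)$ one has $\epsilon|\xi|\le R$, hence $\epsilon\lesssim\langle\xi\rangle^{-1}$. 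Therefore
\[
|\partial^\alpha_\xi\partial^\beta_x[\phi(\epsilon x,\epsilon\xi)]|=\epsilon^{|\alpha|+|\beta|}\,|(\partial^\alpha_\eta\partial^\beta_z\phi)(\epsilon x,\epsilon\xi)|\lesssim\langle\xi\rangle^{-|\alpha|-|\beta|},
\]
uniformly in $\epsilon\in(0,1]$, which is even stronger than the $S^0_{1,0}$ estimates. So the ``main obstacle'' you identify is not really there: the paper's route is a direct appeal to the standard $C^r_\ast$-boundedness of $OPS^0_{1,0}$, while yours trades that citation for an elementary but longer decomposition into multipliers and smooth multiplications. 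Both work; the paper's is shorter.
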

\begin{proof}
The integral kernel of the operator $\phi(\epsilon x,\epsilon D)$ is given by 
\begin{align*}
K(x,y)&=(2\pi)^{-d}\int_{\rd} e^{i(x-y)\xi}\phi(\epsilon x,\epsilon \xi)\,d\xi\\
&=\epsilon^{-d}(\Fur_2^{-1}\phi)(\epsilon x,\epsilon^{-1} (x-y)),
\end{align*}
whwre $\mathcal{F}_2$ is the partial Fourier transform.\par
Hence \eqref{lt4} follows from the estimate
\[
\sup_{x}\int|K(x,y)|\, dy\leq \sup_{x}\int_{\rd} |(\Fur_2^{-1}\phi)(\epsilon x,\eta)|\,d\eta<C
\]
with a constant $C$ independent of $\epsilon$. In fact, for every $N\geq0$ we have 
\[
|(\Fur_2^{-1}\phi)(\epsilon x,\eta)|\leq C_N (1+|\epsilon x|+|\eta|)^{-N}\leq C_N(1+|\eta|)^{-N}.
\]
Formula \eqref{lt5} for $\beta=0$ holds because the operator family $\{\phi(\epsilon x,\epsilon D):\ 0<\epsilon\leq1\}$ is bounded in H\"ormander's class $OPS^0_{1,0}$, which gives uniform boundedness on $C^r_\ast$ (see e.g.\ \cite[Proposition 2.1.D]{taylor1}).\par
In order to prove \eqref{lt5} for every $\beta$, observe that
\begin{align*}
\partial^\beta_x \phi(\epsilon x,\epsilon D) u&=(2\pi)^{-d}\sum_{\gamma\leq\beta}\binom{\beta}{\gamma}\int_{\rd} e^{ix\xi} (i\xi)^{\gamma}\epsilon^{-|\beta-\gamma|} (\partial^{\beta-\gamma}_x \phi)(\epsilon x,\epsilon \xi) \widehat{u}(\xi)\, d\xi\\
&=(2\pi)^{-d}\epsilon^{-|\beta|}
\sum_{\gamma\leq\beta}\binom{\beta}{\gamma}\int_{\rd} e^{ix\xi} (i\epsilon^{-1}\xi)^\gamma(\partial_x^{\beta-\gamma}\phi)(\epsilon x,\epsilon \xi)\widehat{u}(\xi)\, d\xi,
\end{align*}
so that it is sufficient to apply the estimate \eqref{lt5} with $\beta=0$ to the symbol $(i\xi)^\gamma$ $\partial_x^{\beta-\gamma}\phi( x, \xi)$.

\end{proof}

\subsection{Symbol classes and Sobolev spaces \cite{nicolarodino,shubin}}\par
For $0\leq \delta\leq\rho\leq 1$, $m\in\R$, we consider the space $\Gamma^m_{\rho,\delta}$ of functions $a\in C^\infty(\rdd)$ satisfying the estimates
\[
|\partial^\beta_x\partial^\alpha_\xi a(x,\xi)|
\leq C_{\alpha,\beta}(1+|x|+|\xi|)^{m-\rho|\alpha|+\delta|\beta|},\quad \alpha,\beta\in\bN^d,
\]
with the obvious Fr\'echet topology. We denote by $OP\Gamma^m_{\rho,\delta}$ the space of the corresponding pseudodifferential operators.
\par For example, the symbols $\phi_j(x,\xi)$ coming from a Littlewood-Paley partition of unity in $\rdd$ belong to a bounded subset of $\Gamma^0_{1,0}$. \par
We then consider the usual $L^2$-based Sobolev spaces $H^s=H^s(\rd)$, $s\in\R$, and define the weighted Sobolev spaces $Q^s=Q^s(\rd)$, as 
\[
Q^s=H^s\cap\Fur H^s, \quad s\geq0,
\]
and $Q^s=(Q^{-s})^\prime$ when $s<0$. In particular, $Q^0=L^2$. \par
When $s=k\in\bN$, we have the equivalence of norms
\[
\|f\|_{Q^s}\sim \sum_{|\alpha+\beta|\leq m}\|x^\alpha\partial^\beta f\|_{L^2}.
\]
It turns out that, if $0\leq\delta<\rho\leq 1$, $m,s\in\R$,
\begin{equation}\label{cont}
A\in OP\Gamma^m_{\rho,\delta}\Longrightarrow A:Q^{s+m}\to Q^s\, \textit{continuously}.
\end{equation}
In the sequel we will also use the following estimate.
\begin{lemma}\label{lemmat3}
Let $\phi_j(x,\xi)$, $j\geq0$, be a Littlewood-Paley partition of unity in $\rdd$. For every $s\in\R$ we have 
\[
\sum_{j\geq0} 2^{2js}\|\phi_j(x,D) u\|_{L^2}^2\lesssim\|u\|_{Q^s}^2.
\]
\end{lemma}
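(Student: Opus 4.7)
The natural approach is to recast the sum as a quadratic form and reduce to a Shubin-class mapping property. Set
\[
B := \sum_{j\geq 0} 2^{2js}\, \phi_j(x,D)^*\, \phi_j(x,D),
\]
so that $\sum_j 2^{2js}\|\phi_j(x,D)u\|_{L^2}^2 = \langle Bu,u\rangle$. Using the duality $Q^{-s}=(Q^s)'$, it is enough to prove that $B: Q^s \to Q^{-s}$ continuously, and then
\[
|\langle Bu,u\rangle|\ \leq\ \|Bu\|_{Q^{-s}}\,\|u\|_{Q^s}\ \leq\ C\|u\|_{Q^s}^2.
\]
By \eqref{cont}, this reduces to showing $B\in OP\Gamma^{2s}_{1,0}$.

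To identify the symbol, I would first analyze each factor. By the Shubin composition calculus (in the Kohn--Nirenberg quantization used in this paper), the symbol of $\phi_j(x,D)^*\phi_j(x,D)$ equals $|\phi_j(x,\xi)|^2 + r_j(x,\xi)$, where $r_j$ comes from the derivative terms in the composition formula and is supported in $\supp\phi_j\subset\{2^{j-2}\leq|(x,\xi)|\leq 2^j\}$ for $j\geq 1$. Because of the dyadic structure $\phi_j(x,\xi)=\phi_0(2^{-j}(x,\xi))-\phi_0(2^{-j+1}(x,\xi))$, any derivative $\partial_x^\alpha\partial_\xi^\beta\phi_j$ gains a factor $2^{-j(|\alpha|+|\beta|)}$ uniformly in $j$, and the same holds for $|\phi_j|^2$ and for each term of $r_j$.

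Assembling, the symbol of $B$ is $b(x,\xi):=\sum_j 2^{2js}\bigl(|\phi_j(x,\xi)|^2+r_j(x,\xi)\bigr)$. At every point only $O(1)$ summands are nonzero, and on $\supp\phi_j$ one has $2^j\sim 1+|x|+|\xi|$, hence $2^{2js}\sim(1+|x|+|\xi|)^{2s}$. Combined with the derivative bounds above, this yields
\[
|\partial_x^\alpha\partial_\xi^\beta b(x,\xi)|\ \lesssim\ (1+|x|+|\xi|)^{2s-|\alpha|-|\beta|},
\]
so $b\in\Gamma^{2s}_{1,0}$ and $B\in OP\Gamma^{2s}_{1,0}$. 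Applying \eqref{cont} with $m=2s$ and taking the level $\sigma=-s$ gives $B:Q^{\sigma+m}=Q^s\to Q^\sigma=Q^{-s}$ continuously, completing the argument.

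The main delicate point I expect is the rigorous verification that the formal sum defining $B$ genuinely assembles into a pseudodifferential operator in $\Gamma^{2s}_{1,0}$, rather than only a formal series of bounded operators on $L^2$. This hinges on uniform-in-$j$ control of all Shubin seminorms of $|\phi_j|^2+r_j$ so that multiplication by $2^{2js}$ and summation preserves the correct growth and gain under differentiation; the dyadic rescaling of $\phi_j$ is what makes this work.
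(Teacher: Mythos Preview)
Your strategy is exactly the one in the paper: set $B=\sum_j 2^{2js}\phi_j(x,D)^\ast\phi_j(x,D)$, show $B\in OP\Gamma^{2s}_{1,0}$, and conclude by the continuity \eqref{cont} and duality. So the overall architecture is correct.

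There is, however, a genuine gap in your symbol analysis. You assert that the remainder $r_j(x,\xi)$ in the symbol of $\phi_j(x,D)^\ast\phi_j(x,D)$ is supported in $\supp\phi_j$, and then use this to say that at each point only $O(1)$ terms of $\sum_j 2^{2js}(|\phi_j|^2+r_j)$ are nonzero. This is false: while every \emph{finite} term of the asymptotic expansion of the adjoint/composition is a product of derivatives of $\phi_j$ and hence supported in $\supp\phi_j$, the \emph{exact} symbol of $\phi_j(x,D)^\ast$ (and hence of the composition) is not compactly supported. Indeed, for compactly supported $\phi_j$ the symbol of the adjoint is an oscillatory integral that is real-analytic in $(x,\xi)$, so it cannot vanish on an open set without vanishing identically. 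Consequently the full remainder $r_j$ has a nonlocal tail, and your ``only $O(1)$ summands'' argument does not apply to it.

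The paper handles precisely this point. For each $N$ one writes $\phi_j(x,D)^\ast\phi_j(x,D)=a_{j,N}(x,D)+b_{j,N}(x,D)$, where $a_{j,N}$ collects the finite expansion and \emph{is} supported where $|(x,\xi)|\sim 2^j$ (so your locally finite argument applies to $\sum_j 2^{2js}a_{j,N}$ and gives a symbol in $\Gamma^{2s}_{1,0}$), while $b_{j,N}\in\Gamma^{-N}_{1,0}$ with every seminorm $\lesssim 2^{-jN}$. Choosing $N>2|s|$, the series $\sum_j 2^{2js}b_{j,N}$ then converges absolutely in $\Gamma^{-N}_{1,0}\subset\Gamma^{2s}_{1,0}$. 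You correctly flagged this convergence issue as the delicate point at the end of your write-up; what is missing is exactly this two-piece decomposition to treat the nonlocal tail.
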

\begin{proof}
In view of the continuity result in \eqref{cont}, it is sufficient to prove that the sequence of the symbols of $\sum_{j=0}^{k}2^{2js}\phi_j(x,D)^\ast\phi_j(x,D)$ as $k\to+\infty$ converges in $\cS'(\rdd)$ to an element in $\Gamma^{2s}_{1,0}$.\par
Now, by symbolic calculus we have, for every $N\geq0$, 
\[
\phi_j(x,D)^\ast\phi_j(x,D)=a_{j,N}(x,D)+b_{j,N}(x,D)
\]
where $a_{j,N}\in\Gamma^0_{1,0}$ uniformly with respect to $j$ and is supported where $|x|+|\xi|\sim 2^j$, whereas $b_{j,N}\in \Gamma^{-N}_{1,0}$, with every seminorm $\lesssim 2^{-jN}$.\par Hence the series $\sum_{j=0}^{+\infty} 2^{2js} a_{j,N}$ converges pointwise to a symbol in $\Gamma^0_{1,0}$ and the partial sums are in a bounded subset of $\Gamma^0_{1,0}$, so that one has in fact convergence in $\cS'(\rdd)$. On the other hand, if $N>2s$ the series $\sum_{j=0}^{+\infty} 2^{2js} b_{j,N}$ is absolutely convergent with respect to every seminorm of $\Gamma^{-N}_{1,0}$. If moreover $N\geq -2s$ we have $\Gamma^{-N}_{1,0}\subset \Gamma^{2s}_{1,0}$ as well, which concludes the proof. 
\end{proof}
\subsection{Global wave front set \cite{hormander0,rw}}\label{globalregularity} Let us recall the definition of {\it global} wave front set. With respect to the Introduction, we argue here in terms of pseudodifferential operators, cf.\ \cite{hormander0,rw}. \par
A point $z_0=(x_0,\xi_0)\not=(0,0)$ is called non-characteristic for $a\in \Gamma^m_{1,0}$ if there are $\epsilon,C>0$ such that 
\[
|a(x,\xi)|\geq C(1+|x|+|\xi|)^m \quad{\rm for}\ (x,\xi)\in V_{(x_0,\xi_0),\epsilon}
\]
where $V_{z_0,\epsilon}$ is the conic neighborhood
\[
V_{z_0,\epsilon}=\Big\{z\in\rdd\setminus\{0\}:\,\Big| \frac{z}{|z|}-\frac{z_0}{|z_0|} \Big|<\epsilon,\ |z|>\epsilon^{-1}\Big\} .
\]
 Let now $ f\in\cS'(\rd)$. We define its global wave front set $WF_G(f)\subset\rdd\setminus\{0\}$ by saying that $z_0\in\rdd\setminus\{0\}$, does not belong to $WF_G(f)$ if {\it $f$ is Schwartz at $z_0$}, namely there exists $\psi\in \Gamma^0_{1,0}$ which is non-characteristic at $z_0$, such that $\psi(x,D)f\in\cS(\rd)$. The set $WF_G(f)$ is a closed conic subset of $\rdd\setminus\{0\}$. This notion of wave front set gives a characterization of the Schwartz space, in the sense that if $f\in\cS'(\rd)$ then $f\in\cS(\rd)$ if and only if $WF_G(f)=\emptyset$. \par
One can similarly define a notion of $Q^s$ wave front set $WF^s_G(f)$, $s\in\bR$, $f\in\cS'(\rd)$, by saying that $z_0\in\rdd\setminus\{0\}$, does not belong to $WF_G^s(f)$ if {\it $f$ is $Q^s$ at $z_0$}, namely, if there exists a $\psi\in \Gamma^0$ which is non-characteristic at $z_0$, such that $\psi(x,D)f\in Q^s(\rd)$. \par It is easy to see, via symbolic calculus, that if $A\in OP\Gamma^m_{\rho,\delta}$, $0\leq\delta<\rho\leq 1$, $m\in \R$, one has
\[
WF^{s}_G(Af)\subset WF^{s+m}_G(f), \quad u\in\cS'(\rd).
\]
\section{Composition and paradifferential decompositions}
Here we consider the behaviour of the Sobolev spaces $Q^s$ with respect to the composition with smooth functions. As basic fact, observe that if $u\in  Q^s\cap L^\infty$, $s\geq0$, and $F\in C^\infty(\bC)$, $F(0)=0$, then $F(u)\in Q^s$. This fact follows at once from the Moser estimates
\[
\|F(u)\|_{H^s}\leq C \|u\|_{H^s}
\]
 and 
\[
\|F(u)\|_{\Fur H^s}=\|\langle x\rangle ^s F(u)\|_{L^2}\leq C\|\langle x\rangle ^s u\|_{L^2},
\]
with $C=C(\|u\|_{L^\infty})$, where we used the Lipschitz continuity of $F$ on the range of $u$, which is bounded by assumption.  \par
If one is instead interested in the {\it microlocal} behaviour of the nonlinear operator $u\mapsto F(u)$ a deeper analysis is necessary. To this end, we now perform a suitable paradifferential decomposition in phase space (we refer the reader to \cite{taylor1,taylor2} for the classical paradifferential decomposition, which is originally performed in the frequency domain). \par
For the sake of simplicity we assume the $u\in C^0$ and $F\in C^\infty$ are real-valued, and we refer to Remark \ref{remarkcomplex} for the easy changes needed in the complex-valued case. \par

Let $\phi_k(x,\xi)$, $k\geq0$, be a Littlewood-Paley partition of unity of $\rdd$. Let $u_k=\Phi_k(x,D)u$, and consider the telescopic identity\footnote{We can assume the additional property $\phi_k(x,-\xi)=\phi_k(x,\xi)$, so that $\phi_k(x,D) u$ is real-valeud if $u$ is.}
\begin{align}
F(u)&=F(u_0)+\sum_{k=1}^{+\infty} [F(u_k)-F(u_{k-1})]\nonumber\\
&=F(u_0)+\sum_{k=0}^{+\infty} m_k(x)\phi_{k+1}(x,D)u,
\end{align}
where we set
\[
m_k(x)=\int_0^1 F'(\Phi_k(x,D)u+t\phi_{k+1}(x,D)u)\, dt.
\]
Therefore, we can write
\begin{equation}\label{ag1}
F(u)=F(u_0)+M(x,D)u
\end{equation}
with 
\[
M(x,\xi)=\sum_{k=1}^{+\infty} m_k(x)\phi_{k+1}(x,\xi).
\]
Observe that $F(u_0)\in\cS(\rd)$, since $u_0\in\cS(\rd)$ and $F(0)=0$.\par We now apply to $M(x,\xi)$ a version of the symbol smoothing technique \cite{taylor1,taylor2}, but now only in the frequency domain: for any given $\delta\in(0,1)$, we decompose further
\begin{equation}\label{ag2}
M(x,\xi)=M^\sharp(x,\xi)+M^b(x,\xi)
\end{equation}
with 
\begin{equation}\label{msharp}
M^\sharp(x,\xi)=\sum_{k=0}^{+\infty} (\psi_0(2^{-k\delta}D)m_k(x))\phi_{k+1}(x,\xi)
\end{equation}
and
\begin{equation}\label{mb}
M^b(x,\xi)=\sum_{k=0}^{+\infty} ((I-\psi_0(2^{-k\delta}D))m_k(x))\phi_{k+1}(x,\xi).
\end{equation}
The have the following symbol estimates, depending on the regularity of $u$. 
\begin{proposition}\label{pro1}
Let $u\in C^r$, $r>0$. Then $M^\sharp\in\Gamma^0_{1,\delta}$ and, more precisely, we have 
\begin{equation}\label{pro1f1}
|\partial^\beta_x \partial^\alpha_\xi M^\sharp(x,\xi)|\leq 
\begin{cases}
C_{\alpha,\beta}(1+|x|+|\xi|)^{-|\alpha|}& |\beta|\leq r\\
C_{\alpha,\beta}(1+|x|+|\xi|)^{-|\alpha|+\delta(|\beta|-r)}& |\beta|> r.
\end{cases}
\end{equation}
Moreover $M^b\in\Gamma^{-\delta r}_{1,1}$.
\end{proposition}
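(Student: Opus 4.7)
The proof follows the paradifferential symbol-smoothing scheme of Taylor \cite{taylor1,taylor2}, adapted here to the Shubin class. First I would establish the uniform bounds $\|m_k\|_{L^\infty}\lesssim 1$ and $\|m_k\|_{C^r_\ast}\lesssim 1$. The $L^\infty$ bound follows from Lemma \ref{lemmat2} eq.~\eqref{lt4} together with the smoothness of $F'$; the $C^r_\ast$ bound follows by combining Lemma \ref{lemmat2} eq.~\eqref{lt5} at $\beta=0$ (giving uniform $C^r_\ast$-control of $\Phi_k(x,D)u$ and $\phi_{k+1}(x,D)u$) with Lemma \ref{lemmachain} applied to $F'$.

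For $M^\sharp$ I would differentiate term by term and apply the Leibniz rule to each product $\phi_{k+1}(x,\xi)\,\psi_0(2^{-k\delta}D)m_k(x)$. The factor $\phi_{k+1}$ satisfies $|\partial^{\beta-\gamma}_x\partial^\alpha_\xi\phi_{k+1}(x,\xi)|\lesssim 2^{-k(|\alpha|+|\beta-\gamma|)}$ and is supported where $|x|+|\xi|\sim 2^k$, so only $O(1)$ values of $k$ contribute at each point. Lemma \ref{lemmat1} eq.~\eqref{lt2} with $\epsilon=2^{-k\delta}$ yields $\|\partial^\gamma(\psi_0(2^{-k\delta}D)m_k)\|_{L^\infty}\lesssim\|m_k\|_{C^r}$ when $|\gamma|\le r$ and $\lesssim 2^{k\delta(|\gamma|-r)}\|m_k\|_{C^r_\ast}$ when $|\gamma|>r$. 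A short computation shows that the dominant contribution (at $\gamma=\beta$ when $|\beta|>r$) gives exactly the right-hand side of \eqref{pro1f1}, while the other Leibniz terms are subdominant since $|\gamma|-|\beta-\gamma|\le|\beta|$ and $\delta\le 1$.

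For $M^b$ the analogous decomposition is controlled first by Lemma \ref{lemmat1} eq.~\eqref{lt3}, which gives $\|(I-\psi_0(2^{-k\delta}D))m_k\|_{L^\infty}\lesssim 2^{-k\delta r}\|m_k\|_{C^r_\ast}$. For higher-order derivatives I would exploit that $m_k$, being built out of $\Phi_k(x,D)u$ and $\phi_{k+1}(x,D)u$ whose symbols have compact $\xi$-support at scale $2^k$, is essentially bandlimited at scale $2^k$: the chain rule combined with Lemma \ref{lemmat2} yields $\|\partial^N m_k\|_{L^\infty}\lesssim 2^{kN}$ for every $N$. Combining this Bernstein-type bound with the smallness of $\|m_k^b\|_{L^\infty}$ through a Littlewood--Paley decomposition in frequency produces the sharp estimate $\|\partial^\gamma m_k^b\|_{L^\infty}\lesssim 2^{k(|\gamma|-\delta r)}$. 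Plugging this into the Leibniz sum and converting $2^k\sim 1+|x|+|\xi|$ on the support of $\phi_{k+1}$ yields $|\partial^\beta_x\partial^\alpha_\xi M^b|\lesssim (1+|x|+|\xi|)^{-\delta r+|\beta|-|\alpha|}$, i.e.\ $M^b\in\Gamma^{-\delta r}_{1,1}$.

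I expect the main technical obstacle to lie in this last step, namely obtaining the sharp derivative bound $\|\partial^\gamma m_k^b\|_{L^\infty}\lesssim 2^{k(|\gamma|-\delta r)}$ when $|\gamma|>r$. A naive differentiation of $(I-\psi_0(2^{-k\delta}D))m_k$ loses the crucial factor $2^{-k\delta r}$; one must carefully split the Littlewood--Paley sum into the range $k\delta\le j\le k$, where $\|\psi_j(D)m_k\|_{L^\infty}\lesssim 2^{-jr}$ is used, and the tail $j>k$, where a vanishing-moment argument together with $\|\partial^N m_k\|_{L^\infty}\lesssim 2^{kN}$ gives $\|\psi_j(D)m_k\|_{L^\infty}\lesssim 2^{(k-j)N}$ for every $N$. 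The loss of $|\beta|$ powers of $1+|x|+|\xi|$ per $x$-derivative afforded by the $(1,1)$-class is exactly what is needed to absorb the remaining cost.
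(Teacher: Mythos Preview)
Your treatment of $M^\sharp$ is correct and essentially identical to the paper's: both reduce to controlling $\|\partial^\gamma(\psi_0(2^{-k\delta}D)m_k)\|_{L^\infty}$ via \eqref{lt2}, with $\|m_k\|_{C^r},\|m_k\|_{C^r_\ast}\lesssim 1$ coming from Lemmas~\ref{lemmachain} and~\ref{lemmat2}.

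For $M^b$, your Littlewood--Paley splitting has a gap as written. In the tail $j>k$ you invoke only the vanishing-moment bound $\|\psi_j(D)m_k\|_{L^\infty}\lesssim 2^{(k-j)N}$; summing $2^{j|\gamma|}2^{(k-j)N}$ over $j>k$ yields $2^{k|\gamma|}$, not the required $2^{k(|\gamma|-\delta r)}$, so the factor $2^{-k\delta r}$ is lost precisely in the regime $|\gamma|>r$ that you flag as delicate. One cure is to also use the $C^r_\ast$ bound $\|\psi_j(D)m_k\|_{L^\infty}\lesssim 2^{-jr}$ in the tail (you already have it for all $j$) and optimize between the two; this works but is laborious. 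The paper sidesteps the whole difficulty: since $(I-\psi_0(2^{-k\delta}D))$ is a Fourier multiplier it commutes with $\partial^\gamma_x$, so one may apply \eqref{lt3} directly to $\partial^\gamma m_k$, obtaining
\[
\|\partial^\gamma m_k^b\|_{L^\infty}=\|(I-\psi_0(2^{-k\delta}D))\partial^\gamma m_k\|_{L^\infty}\lesssim 2^{-k\delta r}\|\partial^\gamma m_k\|_{C^r_\ast}.
\]
The key input you should upgrade is therefore $\|\partial^\gamma m_k\|_{C^r_\ast}\lesssim 2^{k|\gamma|}$ (not merely the $L^\infty$ bound), which follows from \eqref{lt5} and Lemma~\ref{lemmachain}. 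With this single observation no frequency decomposition of $m_k^b$ is needed, and what you call the ``main technical obstacle'' evaporates.

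A small terminological point: $m_k$ is not ``essentially bandlimited at scale $2^k$'', because $\Phi_k(x,D)$ and $\phi_{k+1}(x,D)$ are genuine pseudodifferential operators (the symbols have compact support in $(x,\xi)$, not in $\xi$ alone), so $\Phi_k(x,D)u$ carries no spectral restriction. What survives, and what you correctly use, is only the derivative estimate \eqref{lt5}.
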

\begin{proof}
The estimates in \eqref{pro1f1} follow if we prove that
\begin{equation}\label{pro1f2}
\|\partial^\beta_x  (\psi_0(2^{-k\delta}D)m_k)\|_{L^\infty}\leq 
\begin{cases}
C_{\beta}& |\beta|\leq r\\
C_{\beta}2^{k\delta(|\beta|-r)}& |\beta|> r.
\end{cases}
\end{equation}
Now, by \eqref{lt2} we have 
\[
\|\partial^\beta_x  (\psi_0(2^{-k\delta}D)m_k)\|_{L^\infty}\leq 
\begin{cases}
C_{\beta}\|m_k\|_{C^r}& |\beta|\leq r\\
C_{\beta}2^{k\delta(|\beta|-r)}\|m_k\|_{C^r_\ast}& |\beta|> r.
\end{cases}
\]
Therefore it is sufficient to estimate the $C^r$ and $C^r_\ast$ norm of $m_k$ in terms of those of $u$. By the definition of $m_k$ and Lemma \ref{lemmachain} this is obtained once we have the following estimates, uniformly with respect to $t\in [0,1]$, $k\geq0$:
\begin{equation}\label{pro1f3}
\|\Phi_k(x,D)u+t\phi_{k+1}(x,D)u\|_{L^\infty}\lesssim\|u\|_{L^\infty}
\end{equation}
\begin{equation}\label{pro1f4}
\|\Phi_k(x,D)u+t\phi_{k+1}(x,D)u\|_{C^r}\lesssim\|u\|_{C^r}
\end{equation}
\begin{equation}\label{pro1f5}
\|\Phi_k(x,D)u+t\phi_{k+1}(x,D)u\|_{C^r_\ast}\lesssim\|u\|_{C^r_\ast}.
\end{equation}
Now, \eqref{pro1f3} follows by \eqref{lt4}. By the Leibniz' rule one obtains also \eqref{pro1f4} when $r$ is an integer. The formula \eqref{pro1f5} (and therefore \eqref{pro1f4} when $r$ is not an integer)  follows from \eqref{lt5}. \par
Let us now prove that  $M^b\in\Gamma^{-\delta r}_{1,1}$. It is sufficient to verify the estimates
\begin{equation}\label{pro1f6}
\|\partial^\beta_x (I-\psi_0(2^{-k\delta}D))m_k\|_{L^\infty}\lesssim 2^{-k\delta r+k|\beta|}.
\end{equation}
Now, by \eqref{lt3} we have 
\begin{equation}\label{pro1f7}
\|\partial^\beta_x (I-\psi_0(2^{-k\delta}D))m_k\|_{L^\infty}\lesssim 2^{-k\delta r}\|\partial^\beta m_k\|_{C^r_\ast},
\end{equation}
so that it remains to prove that
\begin{equation}\label{pro1f8}
\|\partial^\beta_x m_k\|_{C^r_\ast}\lesssim 2^{k|\beta|}.
\end{equation}
By \eqref{chain} (with $r$ replaced by $r+|\beta|$) and \eqref{pro1f3}, we are left to prove that 
\[
\|\partial^\beta_x(\Phi_k(x,D)+t\phi_{k+1}(x,D))u\|_{C^r_\ast}\lesssim 2^{k|\beta|}\|u\|_{C^r_\ast},
\]
which is a consequence of \eqref{lt5}.
\end{proof}

We now prove the boundedness of the ``remainder'' $M^b(x,D)$ in \eqref{mb} on the weighted Sobolev spaces $Q^s$ defined in Section 2. 
\begin{proposition}\label{pro2}
If $u\in C^r$, $r>0$, then
\[
M^b(x,D):Q^{s+\epsilon-\delta r}\to Q^s\quad \textit{continuously, for every}\ s\geq0,\ \epsilon>0. 
\] 
\end{proposition}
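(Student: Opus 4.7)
Writing
\[
M^b(x,D) u = \sum_{k=0}^\infty b_k(x)\, w_{k+1}, \quad b_k := (I - \psi_0(2^{-k\delta} D)) m_k,\ w_{k+1} := \phi_{k+1}(x,D) u,
\]
the plan is to bound each block $b_k\, w_{k+1}$ separately in $Q^s$ and then sum via Cauchy--Schwarz and Lemma \ref{lemmat3}. From the proof of Proposition \ref{pro1} (estimates \eqref{pro1f6}--\eqref{pro1f8}) I record the uniform bounds
\[
\|\partial^\gamma b_k\|_{L^\infty} \lesssim 2^{-k\delta r + k|\gamma|}, \qquad \gamma \in \N^d.
\]

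I first treat the case $s = N \in \N$, using the equivalence $\|v\|_{Q^N} \sim \sum_{|\alpha|+|\beta| \le N} \|x^\alpha \partial^\beta v\|_{L^2}$. By the Leibniz rule,
\[
\|x^\alpha \partial^\beta (b_k w_{k+1})\|_{L^2} \le \sum_{\gamma\le\beta}\binom{\beta}{\gamma}\, \|\partial^\gamma b_k\|_{L^\infty}\, \|x^\alpha \partial^{\beta-\gamma} w_{k+1}\|_{L^2}.
\]
The key auxiliary ingredient is the phase-space Bernstein-type inequality
\[
\|x^\alpha \partial^{\beta-\gamma} w_{k+1}\|_{L^2} \lesssim 2^{k(|\alpha|+|\beta-\gamma|)} \|w_{k+1}\|_{L^2},
\]
which I obtain by noting that $x^\alpha \partial^{\beta-\gamma}\phi_{k+1}(x,D)$ equals, modulo a smoothing operator with rapidly decaying seminorms in $k$, a pseudodifferential operator with symbol in a bounded subset of $\Gamma^0_{1,0}$ of norm $\lesssim 2^{k(|\alpha|+|\beta-\gamma|)}$ (derivatives of $\phi_{k+1}$ being supported where $|x|+|\xi|\sim 2^{k+1}$); combined with the approximate idempotency $\phi_{k+1}(x,D)^2 \sim \phi_{k+1}(x,D)$ modulo lower-order terms via symbolic calculus, this converts the bound on $\|u\|_{L^2}$ into one on $\|w_{k+1}\|_{L^2}$. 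Substituting and summing over $|\alpha|+|\beta|\le N$,
\[
\|b_k w_{k+1}\|_{Q^N} \lesssim 2^{-k\delta r + kN}\, \|w_{k+1}\|_{L^2}.
\]

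Summing over $k$ and applying Cauchy--Schwarz with fixed $\epsilon > 0$,
\[
\|M^b(x,D) u\|_{Q^N} \lesssim \sum_{k\ge 0} 2^{k(N-\delta r)} \|w_{k+1}\|_{L^2} \lesssim \Bigl(\sum_k 2^{-2k\epsilon}\Bigr)^{\!1/2} \Bigl(\sum_k 2^{2k(N+\epsilon-\delta r)} \|w_{k+1}\|_{L^2}^2\Bigr)^{\!1/2},
\]
and Lemma \ref{lemmat3} bounds the second factor by $\|u\|_{Q^{N+\epsilon-\delta r}}$. Extension to arbitrary non-integer $s \ge 0$ follows by complex interpolation between integer endpoints $N_0 \le s \le N_1$, using $[Q^{s_0}, Q^{s_1}]_\theta = Q^{(1-\theta)s_0 + \theta s_1}$.

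The principal technical obstacle is the phase-space Bernstein estimate above. Since $w_{k+1}$ is not literally compactly supported in phase space (only Schwartz-decaying outside a ball of scale $2^k$), one cannot invoke Bernstein's inequality directly: one must commute the weights $x^\alpha$ and derivatives $\partial^{\beta-\gamma}$ through the cutoff $\phi_{k+1}(x,D)$ via pseudodifferential symbolic calculus, tracking the uniformity of the $\Gamma^0_{1,0}$ seminorm bounds across the Littlewood--Paley index $k$ and absorbing the smoothing remainders produced by the non-exact idempotency of $\phi_{k+1}(x,D)$.
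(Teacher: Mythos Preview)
Your route is genuinely different from the paper's. The paper treats the $H^s$ and $\mathcal{F}H^s$ components of the $Q^s$ norm separately: for the $H^s$ part it inserts an auxiliary frequency cutoff $\psi_0(2^{-(k+1)}D)$ into each term, discards a Schwartz remainder, and then performs a full paraproduct decomposition $m_k^b=\sum_j\psi_j(D)m_k^b$, estimating the two resulting double sums by classical Littlewood--Paley theory (\cite[Lemmas 2.1F, 2.1G]{taylor1}); the weighted piece $\|\langle x\rangle^s M^b(x,D)u\|_{L^2}$ is handled by commuting $x^\alpha$ through $M^b(x,D)$ and reducing to even integer $s$ by interpolation. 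Your argument, by contrast, stays symmetric in $x$ and $\xi$: you bound $\|b_k w_{k+1}\|_{Q^N}$ blockwise via the characterization $\sum_{|\alpha|+|\beta|\le N}\|x^\alpha\partial^\beta(\cdot)\|_{L^2}$, a phase-space Bernstein estimate for $w_{k+1}$, and then sum with Lemma~\ref{lemmat3} and Cauchy--Schwarz. This avoids the paraproduct machinery entirely, at the price of the Bernstein step.

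That step is where your write-up needs repair. The identity $\phi_{k+1}(x,D)^2\sim\phi_{k+1}(x,D)$ is simply false at the principal-symbol level ($\phi_{k+1}^2\neq\phi_{k+1}$), so it cannot be the mechanism that converts a bound by $\|u\|_{L^2}$ into one by $\|w_{k+1}\|_{L^2}$. The clean fix is to note that, since $x^\alpha(i\xi)^{\beta-\gamma}$ is a polynomial in $\xi$, the composition $x^\alpha\partial^{\beta-\gamma}\phi_{k+1}(x,D)$ has an \emph{exact} finite symbol
\[
c_{k}(x,\xi)=\sum_{\mu\le\beta-\gamma}\frac{1}{\mu!}\,x^\alpha\,\partial_\xi^\mu\bigl((i\xi)^{\beta-\gamma}\bigr)\,D_x^\mu\phi_{k+1}(x,\xi),
\]
which is supported in the annulus $|z|\sim 2^{k+1}$ and satisfies $2^{-k(|\alpha|+|\beta-\gamma|)}c_{k}\in\Gamma^0_{1,0}$ uniformly in $k$. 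You can then either apply (the proof of) Lemma~\ref{lemmat3} directly to the family $\{2^{-k(|\alpha|+|\beta-\gamma|)}c_{k}\}$, or insert a \emph{fattened} cutoff $\tilde\phi_{k+1}\equiv1$ on $\operatorname{supp}\phi_{k+1}$ to write $c_k(x,D)u=c_k(x,D)\tilde\phi_{k+1}(x,D)u+R_k(x,D)u$, where $R_k$ is genuinely smoothing with $\Gamma^{-N}_{1,0}$ seminorms uniformly bounded (since all terms of the expansion vanish), hence $\|R_k(x,D)u\|_{L^2}\lesssim\|u\|_{Q^{-N}}$, which sums. Either way the Bernstein inequality you need follows, and the rest of your argument---Leibniz, Cauchy--Schwarz with the $2^{-k\epsilon}$ weight, and complex interpolation of the $Q^s$ scale---goes through.
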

\begin{proof}
By the very definition of $Q^s$ we have to prove that 
\begin{equation}\label{pro2f1}
\|M^b(x,D)u\|_{H^s}\lesssim \|u\|_{Q^{s+\epsilon-\delta r}}
\end{equation}
\begin{equation}\label{pro2f1ter}
\|\langle x\rangle ^s M^b(x,D) u\|_{L^2}\lesssim \|u\|_{Q^{s+\epsilon-\delta r}}.
\end{equation}
Consider \eqref{pro2f1}. Since $\epsilon>0$ is arbitrary, we can in fact suppose $s>0$. We rewrite \eqref{mb} as
\[
M(x,\xi)=\sum_{k=0}^{+\infty} m^b_k(x)\phi_{k+1}(x,\xi),\quad m^b_k(x):=(I-\psi_0(2^{-k\delta}D))m_k,
\]
and decompose the corresponding operator as
\[
M^b(x,D)=M_1^b(x,D)+M_2^b(x,D),
\]
with
\[
M_1^b(x,D)=\sum_{k=0}^{+\infty} m^b_k(x) \psi_0(2^{-(k+1)}D)\phi_{k+1}(x,D)
\]
and 
\[
M_2^b(x,D)=\sum_{k=0}^{+\infty} m^b_k(x) (I-\psi_0(2^{-(k+1)}D))\phi_{k+1}(x,D).
\]
Now, we claim that $M^b_2(x,\xi)$ is a Schwartz symbol. One can cheek this by using the estimate
\begin{equation}\label{pro2f1bis}
\|\partial^\beta m_k^b\|_{L^\infty}\lesssim 2^{-\delta k r+k|\beta|},
\end{equation}
that is \eqref{pro1f6}, and the fact that the symbol of the operator \[
(I-\psi_0(2^{-(k+1)}D))\phi_{k+1}(x,D)
\]
 is Schwartz, with seminorms $\lesssim_N 2^{-Nk}$ for every $N\geq0$. This follows from the symbolic calculus, taking into account that $1-\psi_0(2^{-(k+1)}\xi)=0$ where $\phi_{k+1}(x,\xi)$ lives. \par
We now estimate $M_1^b(x,D)$. We further decompose it as
\begin{align}
M_1^b(x,D)u=&\sum_{k=0}^{+\infty} \sum_{j<k+5}[\psi_j(D) m^b_k(x)]\psi_0(2^{-(k+1)}D)\phi_{k+1}(x,D)u\label{pro2f2}\\
&+\sum_{j=5}^{+\infty} \sum_{k\leq j-5}[\psi_j(D) m^b_k(x)]\psi_0(2^{-(k+1)}D)\phi_{k+1}(x,D)u.\label{pro2f3}
\end{align}
Every term in the first sum of \eqref{pro2f2} has spectrum contained in the ball $|\xi|\lesssim 2^k$, so that by the classical Littlewood-Paley theory (\cite[Lemma 2.1G]{taylor1}), using $s>0$, we have
\begin{align*}
\|\sum_{k=0}^{+\infty} \sum_{j<k+5}&[\psi_j(D) m^b_k(x)]\psi_0(2^{-(k+1)}D)\phi_{k+1}(x,D)u\|^2_{H^s}\\
&\lesssim \sum_{k=0}^{+\infty} 2^{2ks}\|\sum_{j<k+5}[\psi_j(D) m^b_k(x)]\psi_0(2^{-(k+1)}D)\phi_{k+1}(x,D)u\|_{L^2}^2.
\end{align*}
Since the multiplier $\sum_{j<k+5}\psi_j(D)=\psi_0(2^{-(k+4)}D)$ is bounded on $L^\infty$ (uniformly with respect to $k$), and $\psi_0(2^{-(k+1)}D)$ is bounded on $L^2$ we can continue the estimate as
\begin{align*}
&\lesssim \sum_{k=0}^{+\infty} 2^{2ks}\|m^b_k\|_{L^\infty}^2\|\phi_{k+1}(x,D)u\|_{L^2}^2\\
&\lesssim \sum_{k=0}^{+\infty} 2^{2k(s-\delta r)}\|\phi_{k+1}(x,D)u\|_{L^2}^2\lesssim \|u\|_{Q^{s-\delta r}}^2,
\end{align*}
where we used \eqref{pro2f1bis} and, in the last estimate, Lemma \ref{lemmat3}.\par
Now, every term of the first sum in \eqref{pro2f3} has spectrum contained where $2^{j-3}\leq |\xi|\leq 2^{j+1}$, and therefore again by Littlewood-Paley theory (\cite[Lemma 2.1F]{taylor1}) we have 
\begin{align*}
\|\sum_{j=5}^{+\infty} \sum_{k\leq j-5}&[\psi_j(D) m^b_k(x)]\psi_0(2^{-(k+1)}D)\phi_{k+1}(x,D)u\|^2_{H^s}\\
&\lesssim\|\Big\{\sum_{j=5}^{+\infty}2^{2js}|\sum_{k\leq j-5}[\psi_j(D) m^b_k(x)]\psi_0(2^{-(k+1)}D)\phi_{k+1}(x,D)u|^2  \Big\}^{1/2}\|_{L^2}^2.
\end{align*}
Using \eqref{pro2f1bis} we have \[
\|\psi_j(D) m^b_k\|_{L^\infty}\lesssim_N 2^{-(j-k)N-\delta rk}\] for every $N\geq0$, so that by Young's inequality of sequences $\ell^2\ast\ell^1\hookrightarrow \ell^2$ we can continue the estimate as
\begin{align*}
&\lesssim \|\Big\{\|\sum_{j=5}^{+\infty}2^{2(s-\delta r)k}|\psi_0(2^{-(k+1)}D)\phi_{k+1}(x,D)u|^2  \Big\}^{1/2}\|_{L^2}^2\\
&= \sum_{j=5}^{+\infty}2^{2(s-\delta r)k}\|\psi_0(2^{-(k+1)}D)\phi_{k+1}(x,D)u\|_{L^2}^2\\
&\lesssim \sum_{j=5}^{+\infty}2^{2(s-\delta r)k}\|\phi_{k+1}(x,D)u\|_{L^2}^2\lesssim \|u\|_{Q^{s-\delta r}}^2.
\end{align*}
It remains to prove \eqref{pro2f1ter}. To this end, we observe that by interpolation we can suppose the $s$ is a non-negative even integer, so that $\langle x\rangle^s$ is a polynomial. Now, for $|\alpha|\leq s$, we have
\[
x^\alpha M^b(x,D)u=\sum_{\beta\leq\alpha} (-1)^{|\beta|}\binom{\alpha}{\beta}D_\xi^\beta M^b(x,D)(x^{\alpha-\beta}u).
\]
Hence 
\[
\|x^\alpha M^b(x,D)u\|_{L^2}\lesssim\sum_{\beta\leq\alpha}\|D_\xi^\beta M^b(x,D)(x^{\alpha-\beta}u)\|_{L^2}.
\]
Now, the symbol 
\[
D_\xi^\beta M^b(x,\xi)=\sum_{k=0}^{+\infty} m^b_k(x)D^\alpha_\xi \phi_k(x,\xi)
\]
has essentially the same structure than $M^b$ and the formula \eqref{pro2f1} keeps valid for it, as one can easily verify. Hence we have, for $\epsilon>0$,
\[
\|x^\alpha M^b(x,D)u\|_{L^2}\lesssim \sum_{\beta\leq\alpha} \|x^{\alpha-\beta} u\|_{H^{\epsilon-\delta r}}\lesssim\|u\|_{Q^{s+\epsilon-\delta r}},\quad\textrm{for}\, |\alpha|\leq s.
\]
This concludes the proof.
\end{proof}
\begin{remark}\label{remarkcomplex}
In the case when $F\in C^\infty(\bC)$ and $u$ are complex-valued, one has the telescopic identity 
\[
F(u)=F(\phi_0(x,D)u)+\sum_{k=0}^{+\infty} m_k(x)\phi_{k+1}(x,D)u+\sum_{k=0}^{+\infty} \tilde{m}_k(x)\phi_{k+1}(x,-D)\overline{u},
\]
where 
\[
m_k(x)=\int_0^1 \frac{\partial F}{\partial z}(\Phi_k(x,D)u+t\phi_{k+1}(x,D)u)\, dt,
\]
\[
\tilde{m}_k(x)=\int_0^1 \frac{\partial F}{\partial \overline{z}}(\Phi_k(x,D)u+t\phi_{k+1}(x,D)u)\, dt.
\]
Hence the same arguments as above can be repeated separately for the two sums. 
\end{remark}
\begin{theorem}\label{teo1}
Let $u\in C^r\cap Q^s$, $r,s>0$, and $s\leq \sigma< s+r$.  Let $F\in C^\infty(\bC)$, $F(0)=0$, and  $z_0\in T^\ast \rd\setminus\{0\}$. Then 
\[
u\in Q^\sigma\textit{ at }z_0 \Longrightarrow F(u)\in Q^\sigma\textit{ at }z_0.
\]
\end{theorem}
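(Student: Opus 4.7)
The plan is to exploit the paradifferential decomposition built just above the statement:
\[
F(u)=F(u_0)+M^\sharp(x,D)u+M^b(x,D)u,
\]
and handle the three summands separately. The parameter $\delta\in(0,1)$ entering the definitions \eqref{msharp}--\eqref{mb} is still at our disposal, and the decisive choice is to pick
\[
\delta\in\Bigl(\tfrac{\sigma-s}{r},\,1\Bigr),
\]
a non-empty interval precisely because of the hypothesis $\sigma<s+r$ (while $\sigma\geq s$ makes the lower bound non-negative). I will work out the real-valued case; the complex case reduces to it via Remark \ref{remarkcomplex}, where the extra sum involving $\overline u$ has the same symbol-theoretic structure and admits verbatim the same treatment.

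The first summand $F(u_0)$ lies in $\cS(\rd)$ (since $u_0\in\cS(\rd)$ and $F(0)=0$) and is therefore harmless. For the remainder $M^b(x,D)u$, Proposition \ref{pro2} applied with Sobolev index $\sigma$ in place of $s$ yields the continuous mapping
\[
M^b(x,D):Q^{\sigma+\epsilon-\delta r}\to Q^\sigma\qquad\text{for every }\epsilon>0.
\]
Our choice of $\delta$ gives $\delta r>\sigma-s$, so for $\epsilon>0$ sufficiently small we have $\sigma+\epsilon-\delta r\leq s$, and hence $u\in Q^s\subset Q^{\sigma+\epsilon-\delta r}$. Consequently $M^b(x,D)u\in Q^\sigma$ \emph{globally}, which is strictly stronger than the required microlocal conclusion at $z_0$.

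For the principal piece, Proposition \ref{pro1} places $M^\sharp$ in $\Gamma^0_{1,\delta}$ with $0\leq\delta<1=\rho$, so the microlocal mapping property recorded at the end of Section \ref{globalregularity} applies and yields
\[
WF_G^\sigma\bigl(M^\sharp(x,D)u\bigr)\subset WF_G^\sigma(u).
\]
Thus $z_0\notin WF_G^\sigma(u)$ implies $z_0\notin WF_G^\sigma(M^\sharp(x,D)u)$, and combining the three contributions we conclude that $F(u)$ is $Q^\sigma$ at $z_0$. The main tension, and the reason the statement requires the two-sided bound $s\leq\sigma<s+r$, lies exactly in the selection of $\delta$: one needs $\delta<1$ in order to keep $M^\sharp$ in a symbol class with $\rho>\delta$ (hence microlocal), yet $\delta r$ large enough to absorb the derivative loss $\sigma-s$ carried by the type-$(1,1)$ remainder $M^b$ through Proposition \ref{pro2}. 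The inequality $\sigma<s+r$ is precisely what makes these two competing demands compatible, and I expect this balancing of $\delta$ to be the only non-routine step of the argument.
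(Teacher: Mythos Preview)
Your proposal is correct and follows essentially the same approach as the paper's own proof: the same three-term decomposition, the same use of Propositions \ref{pro1} and \ref{pro2}, and the same balancing of $\delta$ against the gap $\sigma-s$. The only cosmetic difference is that you fix $\delta\in((\sigma-s)/r,1)$ at the outset and apply Proposition \ref{pro2} with target exponent $\sigma$, whereas the paper phrases it as choosing $\epsilon$ small and $\delta$ close to $1$ so that $\sigma\leq s-\epsilon+\delta r$ and then uses the inclusion $Q^{s-\epsilon+\delta r}\subset Q^\sigma$; these are equivalent formulations of the same constraint.
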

\begin{proof}
Suppose first the $u$ and $F$ are real-valued. In view of \eqref{ag1}, \eqref{ag2}, Propositions \ref{pro1}, \ref{pro2}, for every $\delta\in (0,1)$, $\epsilon>0$, we can write
\[
F(u)=F(\phi_0(x,D)u)+M^\sharp(x,D) u+M^b(x,D)u,
\]
with $M^\sharp(x,D)\in OP\Gamma^0_{1,\delta}$, $F(\phi_0(x,D)u)\in \cS(\rd)$ and $M^b(x,D)u\in Q^{s-\epsilon+\delta r}$. In particular, given $\sigma\in [s,s+r)$, we choose $\epsilon$ sufficiently small and $\delta$ sufficiently close to $1$ so as $\sigma\leq s-\epsilon+\delta r$. We then use that $M^\sharp(x,D)$ preserves the $Q^\sigma$ wave front set, and $Q^{s-\epsilon+\delta r}\subset Q^\sigma$.\par
In the general case, when $F$ and $u$ are complex-valued, by Remark \ref{remarkcomplex}, we have a similar decomposition, i.e.
\[
F(u)=F(\phi_0(x,D)u)+M_1^\sharp(x,D) u+M_1^b(x,D)u+M_2^\sharp(x,D) \overline{u}+M_2^b(x,D)\overline{u},
\]
where $M_1^\sharp(x,D)$, $M_2^\sharp(x,D)$ enjoy the same properties as $M^\sharp(x,D)$ and similarly for $M_1^b(x,D)$, $M_2^b(x,D)$, and we still obtain the desired conclusion.  
\end{proof}
\begin{corollary}
Let $d/2<s\leq\sigma<2s-d/2$ and $u\in Q^s$. Let $F\in C^\infty(\bC)$, $F(0)=0$, and  $z_0\in T^\ast \rd\setminus\{0\}$. Then 
\[
u\in Q^\sigma\textit{ at }z_0 \Longrightarrow F(u)\in Q^\sigma\textit{ at }z_0.
\]
\end{corollary}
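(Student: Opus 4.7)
The plan is to derive this corollary as a direct consequence of Theorem \ref{teo1} by using a Sobolev embedding to supply the missing Hölder regularity hypothesis. The structure of the assumptions $d/2 < s \leq \sigma < 2s - d/2$ is tailored precisely so that the range of admissible $r$ in Theorem \ref{teo1} is non-empty.

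First I would observe that $Q^s \subset H^s$, so the hypothesis $u \in Q^s$ with $s > d/2$ places $u$ in $H^s$ with $s > d/2$. The classical Sobolev embedding gives $H^s \hookrightarrow C^r_\ast$ for every $r < s - d/2$ (and in fact $H^s \hookrightarrow C^r$ when $r < s-d/2$ is not an integer, which is all we need, since we can always shrink $r$ slightly). In particular $u \in C^r$ for every $0 < r < s - d/2$.

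Next I would verify that the range $\sigma - s < r < s - d/2$ is non-empty. The lower bound $\sigma - s$ is non-negative (since $\sigma \geq s$), and by the hypothesis $\sigma < 2s - d/2$ we have $\sigma - s < s - d/2$, so one may pick $r$ with $\sigma - s < r < s - d/2$ (if $\sigma = s$ take any $0 < r < s - d/2$). With this choice one has $u \in C^r \cap Q^s$, $r, s > 0$, and $s \leq \sigma < s + r$, which are exactly the hypotheses of Theorem \ref{teo1}.

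Applying Theorem \ref{teo1} then yields $u \in Q^\sigma$ at $z_0 \Longrightarrow F(u) \in Q^\sigma$ at $z_0$, which is the claim. There is no real obstacle here beyond bookkeeping the inequalities; the only subtle point is the availability of the embedding $H^s \hookrightarrow C^r_\ast$ and the verification that the required $r$ exists, both of which are encoded in the hypothesis $d/2 < s \leq \sigma < 2s - d/2$.
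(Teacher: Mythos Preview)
Your argument is correct and matches the paper's own proof, which simply invokes Theorem~\ref{teo1} together with the embedding $Q^s\subset H^s\subset C^r$ for every $r<s-d/2$. Your version is just a more explicit rendering of the inequality bookkeeping showing that an admissible $r$ with $\sigma-s<r<s-d/2$ exists.
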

\begin{proof}
It follows from Theorem \ref{teo1}, because $Q^s\subset H^s\subset C^{r}$ for every $r<s-d/2$. 
\end{proof}
\section{Propagation for semilinear Schr\"odinger equations}
We now study the propagation of $Q^s$-singularities for semilinear Schr\"odinger equations 
\[
D_tu+a(t,x,D)u=F(u)
\]
for a symbol $a(t,\cdot)\in\Gamma^2_{1,0}$, and a smooth $F\in C^\infty(\bC)$.
We first study a class of linear equations.\par
Let $T>0$ and consider the Cauchy problem 
\begin{equation}\label{cauchy}
\begin{cases}
D_t u+a(t,x,D)u+b(t,x,D)u=f(t),\\
u(0)=u_0
\end{cases}
\end{equation}
with $t\in [0,T]$, $x\in\rd$. We suppose that\medskip
\begin{itemize}
\item[{\bf (i)}] $a(t,\cdot)$ belongs to a bounded subset of $\Gamma^2_{1,0}$, $0\leq t\leq T$, and the map $t\mapsto a(t,\cdot)$ is continuous with values in $C^\infty(\rdd)$\footnote{Or equivalently in $\cS'(\rdd)$, or even pointwise.};
\item[{\bf (ii)}] ${\rm Im}\, a(t,x,\xi)\leq C$, $0\leq t\leq T$, $x,\xi\in\rd$, for some constant $C>0$;
\item[{\bf (iii)}] $b(t,\cdot)$ belongs to a bounded subset of $\Gamma^0_{1,\delta}$, $0\leq t\leq T$, for some $0<\delta<1$, and the map $t\mapsto b(t,\cdot)$ is continuous with values in $C^\infty(\rdd)$.
\end{itemize}
\medskip
\begin{theorem}\label{teo4}
Assume that {\bf (i)}--{\bf (iii)} above are fulfilled and let $s\in\R$. For every $f\in L^1((0,T);Q^s)$ and $u_0\in Q^s$, there is a unique solution $u\in C([0,T];Q^s)$ of \eqref{cauchy}.
\end{theorem}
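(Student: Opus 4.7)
The plan is to reduce the $Q^s$-wellposedness of \eqref{cauchy} to an $L^2$ energy estimate via conjugation by an elliptic Shubin weight, then obtain that estimate from the one-sided bound $\mathrm{Im}\,a\le C$ through a sharp Gårding / Fefferman--Phong inequality, and finally deduce existence by a standard regularization and compactness argument.

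For the reduction, fix a self-adjoint elliptic $\Lambda\in OP\Gamma^1_{1,0}$, for instance the Weyl quantization of $(1+|x|^2+|\xi|^2)^{1/2}$, so that $\Lambda^s:Q^s\to L^2$ is an isomorphism. Setting $v=\Lambda^s u$, $g=\Lambda^s f$, $v_0=\Lambda^s u_0$, the Cauchy problem becomes
\[
D_tv+\tilde a(t,x,D)v+\tilde b(t,x,D)v=g,\qquad v(0)=v_0,
\]
with $\tilde a(t)=\Lambda^s a(t,x,D)\Lambda^{-s}$ and $\tilde b(t)=\Lambda^s b(t,x,D)\Lambda^{-s}$. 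The symbolic calculus in the mixed Shubin classes $\Gamma^m_{1,\delta}$, valid since $\delta<1$, shows that $\tilde a(t)\in OP\Gamma^2_{1,0}$ has principal symbol $a(t,x,\xi)$ modulo $\Gamma^1_{1,0}$, and that $\tilde b(t)$ remains in a bounded subset of $OP\Gamma^0_{1,\delta}$, hence uniformly $L^2$-bounded by \eqref{cont}. Thus assumptions {\bf (i)}--{\bf (iii)} are preserved and we are reduced to the case $s=0$.

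For smooth $v$ one then computes
\[
\frac{d}{dt}\|v(t)\|_{L^2}^2=2\,\mathrm{Im}\langle \tilde a v,v\rangle+2\,\mathrm{Im}\langle \tilde b v,v\rangle-2\,\mathrm{Im}\langle g,v\rangle,
\]
and the last two summands are dominated by $C\|v\|_{L^2}^2+2\|g\|_{L^2}\|v\|_{L^2}$. For the first, the skew-adjoint part $(\tilde a-\tilde a^*)/2i$ has Weyl principal symbol $\mathrm{Im}\,a$, and since $C-\mathrm{Im}\,a\ge 0$ belongs to a bounded subset of $\Gamma^2_{1,0}$, the Fefferman--Phong inequality in Shubin classes (cf.\ \cite{nicolarodino}) yields $\mathrm{Im}\langle \tilde a v,v\rangle\le C'\|v\|_{L^2}^2$. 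Gronwall's inequality then produces the a priori estimate
\[
\|u(t)\|_{Q^s}\le e^{Ct}\Big(\|u_0\|_{Q^s}+\int_0^t\|f(\tau)\|_{Q^s}\,d\tau\Big),\qquad t\in[0,T],
\]
from which uniqueness of solutions follows at once by applying it to the difference of two solutions.

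For existence, replace $a$ and $b$ by compactly supported cut-offs $\chi(\varepsilon\cdot)a,\chi(\varepsilon\cdot)b\in\Gamma^{-\infty}$ and $u_0,f$ by Schwartz approximations; the regularized problem is an ODE in $\cS(\rd)$ with global-in-time solutions $u_\varepsilon\in C^1([0,T];\cS)$. The a priori estimate is uniform in $\varepsilon$, and applying it to differences $u_\varepsilon-u_{\varepsilon'}$ produces a Cauchy sequence in $C([0,T];Q^s)$ whose limit solves \eqref{cauchy}. The main obstacle is the $L^2$ energy step: besides the principal symbol $\mathrm{Im}\,a$, the skew-adjoint part of $\tilde a$ carries first-order corrections in $\Gamma^1_{1,0}$ arising from the conjugation by $\Lambda^s$ and from the discrepancy between Kohn--Nirenberg and Weyl adjoints, and these corrections must be tracked carefully and either absorbed into the Fefferman--Phong remainder or shown to contribute only subprincipally; this is where the technical heart of the argument lies.
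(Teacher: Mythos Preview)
Your approach is essentially the same as the paper's: reduce to $s=0$ by conjugating with an elliptic Shubin weight, then run an $L^2$ energy estimate using a G\aa rding-type lower bound, and deduce existence by approximation/duality. The paper appeals to the sharp G\aa rding inequality in the Shubin calculus and to the standard functional-analytic existence argument from \cite[Theorem~23.1.2]{hormander}; your choice of Fefferman--Phong and explicit regularization is a harmless variant.

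The one point that needs correcting is your last paragraph, where you flag as the ``main obstacle'' the first-order corrections in $\Gamma^1_{1,0}$ coming from the conjugation by $\Lambda^s$ and from the Kohn--Nirenberg adjoint. In the Shubin calculus $\Gamma^m_{1,0}$ both $x$-- and $\xi$--derivatives lower the order by one, so the terms in the composition expansion drop by two orders at each step: the symbol of $\Lambda^s a(t,x,D)\Lambda^{-s}$ equals $a(t,x,\xi)$ modulo $\Gamma^{0}_{1,0}$, not merely modulo $\Gamma^1_{1,0}$, and likewise the subprincipal term in the adjoint formula $a^\ast\sim\sum_\alpha\frac{1}{\alpha!}\partial_\xi^\alpha D_x^\alpha\bar a$ lies in $\Gamma^{0}_{1,0}$. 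Hence these corrections are uniformly $L^2$-bounded and can simply be absorbed into $\tilde b$; the conjugated symbol $\tilde a$ still satisfies $\mathrm{Im}\,\tilde a\le C'$ pointwise, and the G\aa rding/Fefferman--Phong step goes through without any delicate bookkeeping. This is exactly what the paper means when it asserts that $\tilde a,\tilde b$ satisfy the same hypotheses {\bf (i)}--{\bf (iii)} as $a,b$. Once you make this observation, the ``technical heart'' you anticipate disappears.
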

\begin{proof}
The pattern is an easy modification of the corresponding one for hyperbolic operators (see e.g.\ \cite[Lemma 23.1.1, Theorem 23.1.2]{hormander}), so that we give just a sketch of the proof.\par First of all it is easy to see that, under the assumptions {\bf (i)} and {\bf (iii)}, the operators $a(t,x,D)$ and $b(t,x,D)$ are strongly continuous $Q^s\to Q^{s-2}$ and $Q^s\to Q^s$ respectively for every $s\in\R$ (cf.\ the argument in \cite[page 386]{hormander}).  \par
Now, by a functional analysis argument one is reduced to prove the following a priori estimates:
\begin{equation}\label{apriori}
\|u(t)\|_{Q^s}\lesssim \|u(0)\|_{Q^s}+\int_0^T \|f(t)\|_{Q^s}\, dt,\quad 0\leq t\leq T,
\end{equation}
for every $u\in C^1([0,T]; Q^s)\cap C^0([0,T]; Q^{s+2})$, with
 \[
f=D_t u+a(t,x,D)u+b(t,x,D)u.
\]
When $s=0$ this estimate follow from the identity $\frac{d}{dt}\|u(t)\|^2_{L^2}=2{\rm Re} (u'(t),u(t))$, and the sharp Garding inequality
\[
{\rm Re}(ia(t,x,D)u+ib(t,x,D)u,u)\geq -C\|u\|_{L^2}^2
\]
(see e.g.\ \cite{hormander,shubin} for the inequality for $a(t,x,D)$, whereas $b(t,x,D)$ is just an $L^2$-bounded perturbation). The case of a general real $s$ follows by applying this $L^2$ result to the operator $E_s(a(t,x,D)+b(t,x,D))E_{-s}$, where $E_s=(-\Delta+|x|^2)^{s/2}\in\Gamma^s_{1,0}$ (cf.\ \cite{shubin}), which by the symbolic calculus has the form $\tilde{a}(t,x,D)+\tilde{b}(t,x,D)$, with $\tilde{a}$ and $\tilde{b}$ satisfying the same assumptions {\bf (i)}--{\bf (iii)} as $a,b$, respectively.
\end{proof}

In order to study the propagation of singularities we strengthen the assumption ${\bf (ii)}$ as follows:\medskip
\begin{itemize}
\item[$\bf (ii)^\prime$]  $a(t,x,\xi)$ is real-valued and has a well-defined principal symbol, i.e. there exists \\ $a_2\in C([0,T];C^\infty(\rdd\setminus\{0\}))$, real-valued and positively homogeneous of degree $2$ with respect to $z=(x,\xi)$, such that, for some cut-off function $\phi\in C^\infty_0(\rdd)$, $\phi=1$ in a neighborhood of the origin, the symbol
\[
a(t,x,\xi)-\phi(x,\xi)a_2(t,x,\xi)
\]
for $0\leq t\leq T$ belongs to a bounded subset of $\Gamma^{2-\epsilon}_{1,0}$, for some $\epsilon>0$.
\end{itemize}
\medskip
Consider now the Hamiltonian system
\[
\begin{cases}\displaystyle
\dot{x}=\frac{\partial a_2}{\partial \xi}(t,x,\xi)\\ \displaystyle
\dot{\xi}=-\frac{\partial a_2}{\partial x}(t,x,\xi).
\end{cases}
\]
The Hamiltonian vector field $H_{a_2}:=(\partial a_2/\partial \xi,-\partial a_2/\partial x)$, which is smooth and positively homogeneous of degree $1$ on $\R^{2d}\setminus\{0\}$, extends to a globally Lipschitz one on $\R^{2d}$. Hence the solutions will be defined for every $t\in [0,T]$. Every integral curve corresponding to an initial condition $(x_0,\xi_0)\not=0$ is called {\it bicharacteristics} and we denote by 
\[
\chi_t:\rdd\setminus\{0\}\to \rdd\setminus\{0\}
\] the corresponding flow, which is a smooth diffeomorphism, homogeneous of degree $1$ with respect to $z=(x,\xi)\in \rdd\setminus\{0\}$, as well as its inverse. \
\begin{theorem}\label{teo5}
Assume {\bf (i)}, {$\bf (ii)^\prime$}, {\bf (iii)}. Let $u\in C([0,T];Q^{s_0})$, $s_0\in\R$, be a solution of \eqref{cauchy}, and $z_0\in\rdd\setminus\{0\}$. Then, if $s\geq s_0$ we have 
\[
u(0)\in Q^s\ \textit{at}\ z_0,\ f\in C([0,T];Q^s)\Longrightarrow u(t)\in Q^s\ \textit{at}\ \chi_t(z_0)
\]
for every $t\in (0,T]$. 
\end{theorem}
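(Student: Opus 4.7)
The strategy is to combine a Duhamel splitting with a classical microlocal positive-commutator energy estimate, in the spirit of H\"ormander's treatment of the hyperbolic case \cite{hormander}, adapted to the Shubin calculus and the global wave front set $WF_G$. First, decompose $u = v + w$ where $v$ solves the homogeneous Cauchy problem with datum $u(0)$ and $w$ solves the inhomogeneous one with zero datum and forcing $f$. By Theorem \ref{teo4} applied at regularity $s$ (using $f \in C([0,T]; Q^s) \hookrightarrow L^1((0,T); Q^s)$), we obtain $w \in C([0,T]; Q^s)$; thus $w(t)$ is globally $Q^s$ and the problem reduces to proving the propagation statement for the homogeneous solution $v \in C([0,T]; Q^{s_0})$.

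Fix $t \in (0,T]$ and, for each target regularity $\sigma$ in the admissible range, construct a family of Shubin symbols $p_\tau \in \Gamma^\sigma_{1,0}$, $0 \leq \tau \leq t$, with $p_0$ elliptic at $z_0$ and essentially supported in a small conic neighbourhood of $z_0$ where $u(0) \in Q^\sigma$. Take $p_\tau(z) := p_0(\chi_{-\tau}(z))$ away from a fixed compact neighbourhood of the origin, smoothly extended inside. Then $p_\tau$ is elliptic at $\chi_\tau(z_0)$, essentially supported in $\chi_\tau(\mathrm{supp}\, p_0)$, and satisfies
\[
\partial_\tau p_\tau + \{a_2, p_\tau\} = 0
\]
modulo Schwartz errors. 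The transport equation is chosen precisely so that the commutator identity
\[
\partial_\tau P_\tau - \tfrac{1}{i}[a_2(x,D), P_\tau] \in OP\Gamma^\sigma_{1,0}
\]
holds, i.e.\ the principal parts of order $\sigma+1$ of the two summands cancel.

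Computing $\frac{d}{d\tau}\|P_\tau v(\tau)\|_{L^2}^2$ by using the equation together with the identity above, the $a_2$-contributions combine into a remainder $(R_\tau v, P_\tau v)$ with $R_\tau \in OP\Gamma^\sigma_{1,0}$; the term $\mathrm{Im}(a_2(x,D) P_\tau v, P_\tau v)$ is handled by sharp G\aa rding (since $a_2$ is real, $a_2(x,D) - a_2(x,D)^\ast$ has order $1$ and its pairing against $P_\tau v$ is $O(\|P_\tau v\|_{L^2}^2)$); and the sub-principal contributions from $a - a_2 \in \Gamma^{2-\epsilon}_{1,0}$ and $b \in \Gamma^0_{1,\delta}$ produce operators of order strictly less than $\sigma + 1$. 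Provided $\sigma - s_0$ is sufficiently small, all these terms are bounded using only the a priori regularity $v \in C([0,T]; Q^{s_0})$, and Gr\"onwall's inequality yields
\[
\|P_t v(t)\|_{L^2}^2 \lesssim \|P_0 v(0)\|_{L^2}^2 + \sup_{[0,T]}\|v(\tau)\|_{Q^{s_0}}^2 < \infty,
\]
i.e.\ $v(t) \in Q^\sigma$ at $\chi_t(z_0)$. Iterating with a fixed positive gain in $\sigma$ at each step, the target regularity $s$ is reached in finitely many iterations.

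The main obstacle is the sub-principal bookkeeping in the energy computation: the error terms produced by $a - a_2$ and $b$ must be quantified in the Shubin classes so that they can be absorbed by the current regularity $s_0$, which is precisely what forces the iterative scheme rather than a one-shot argument. The construction of the symbol family $p_\tau$ is a secondary subtlety, since the Hamiltonian flow $\chi_\tau$ is only defined on $\rdd \setminus \{0\}$ and only positively homogeneous of degree $1$; the smooth cut-off near the origin must not destroy the $\Gamma^\sigma_{1,0}$ symbol estimates. Both points are resolved by calculations parallel to the classical hyperbolic setting, modified to account for the fact that $a_2$ has order $2$ (rather than order $1$) in the Shubin calculus, which is compensated by the corresponding degree-$1$ homogeneity of the Hamiltonian vector field.
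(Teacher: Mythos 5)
Your plan is a genuine variant of the paper's argument rather than a reproduction of it. The paper does not perform a Duhamel splitting and does not work with order-$\sigma$ test operators: it conjugates the full inhomogeneous equation by an order-$0$ microlocalizer $q(t,x,D)$ with $q(t,\cdot)=\phi\cdot(q_0\circ\chi_t^{-1})$, computes the commutators $[D_t+a,q]$ and $[b,q]$ (which, once the transport equation removes the principal part, leave remainders in $\Gamma^{-\epsilon}_{1,0}$ and $\Gamma^{-(1-\delta)}_{1,\delta}$ respectively), and then simply invokes Theorem~\ref{teo4} to conclude $q(t,x,D)u\in C([0,T];Q^s)$, running an induction with step $\epsilon_0=\min\{\epsilon,1-\delta\}$. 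You split $u=v+w$ and run a direct $L^2$--Gronwall estimate on $P_\tau v$ with $P_\tau$ of order $\sigma$. Both are positive-commutator arguments, so the underlying idea is the same; the paper's version is shorter because the energy estimate is delegated to Theorem~\ref{teo4} instead of being rederived, while your Duhamel reduction is a harmless simplification.

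Two side remarks on orders. In the Shubin calculus every derivative, in $x$ or in $\xi$, lowers the order; hence $\{a_2,p_\tau\}\in\Gamma^\sigma_{1,0}$, and after cancellation $\partial_\tau P_\tau-\tfrac1i[a_2(x,D),P_\tau]\in OP\Gamma^{\sigma-2}_{1,0}$, not $OP\Gamma^\sigma_{1,0}$. Likewise $a_2(x,D)-a_2(x,D)^\ast\in OP\Gamma^{0}_{1,0}$; since $a_2$ is real-valued you need no sharp G{\aa}rding inequality here, only the symmetry of $a_2(x,D)$ modulo order $0$. These slips are all in your favor; the true loss per step is $\epsilon_0=\min\{\epsilon,1-\delta\}$, contributed by the $a-a_2$ and $b$ terms.

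The genuine gap is in how the iteration closes. Your Gronwall bound, with $\sup_\tau\|v(\tau)\|^2_{Q^{s_0}}$ on the right-hand side, only closes when every remainder operator has order $\le s_0$, i.e.\ for the first step $\sigma\le s_0+\epsilon_0$. At the next step the remainders have order up to $s_0+\epsilon_0$, and at that point you no longer have $v\in C([0,T];Q^{s_0+\epsilon_0})$ globally; you have it only microlocally near $\chi_\tau(V)$. To control a pairing such as $(R_\tau v,P_\tau v)$ you must then use that $R_\tau$ is essentially supported in a slight enlargement of the cone $\chi_\tau(V)$, factor it through a wider order-$0$ microlocalizer, and invoke the previous step's conclusion on that wider cone; this in turn forces the initial conic neighbourhood $V$ to be shrunk at each of the finitely many steps. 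This nested-support bookkeeping is the crux of the argument; it is what the phrase ``if $V$ is small enough'' accomplishes in the paper's induction, and it is absent from your write-up.
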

\begin{proof}
In fact, we will prove the following result. \par\medskip
{\bf Claim.} {\it For some smooth function $q_0(x,\xi)$ in $\rdd\setminus\{0\}$, positively homogeneous of degree $0$, with $q_0(z_0)\not=0$, and a cut-off function $\phi\in C^\infty_0(\rdd)$, $\phi=1$ in a neighborhood of the origin, setting $q(t,x,\xi):=\phi(x,\xi)q_0(\chi_t^{-1}(x,\xi))$ we have}
\[
q(t,x,D) u(t)\in C([0,T];Q^s).
\]
\par\medskip
By definition of $Q^s$-microlocal regularity (Section \ref{globalregularity}), this gives the desired conclusion. \par
Since we already start with a function $u\in C([0,T];Q^{s_0})$, we can argue by induction and suppose that the above claim holds for the Sobolev exponent $s-\epsilon_0$, for some fixed $\epsilon_0$, and we prove it for the exponent $s$. \par
Now, let $q_0$, $q$ as in the claim, with $q_0$ supported in a small open conic neighborhood $V\subset\rdd\setminus\{0\}$ of $z_0$, so that $q(0,x,D)u(0)\in Q^s$.
Observe that the commutator
\[
[ D_t+a(t,x,D),q(t,x,D)]
\]
has a symbol given, for large $|x|+|\xi|$,  by
\[
-i\underbrace{\Big(\frac{\partial}{\partial t}+H_{a_2}\Big)(q_0\circ \chi_t^{-1})}_{=0}+r_1=r_1, 
\]
where $r_1(t,x,\xi)$ lies in a bounded subset of $\Gamma^{-\epsilon}_{1,0}$ by {$\bf (ii)^\prime$}, and is continuous as a function of $t$ valued in $C^\infty(\rdd)$. On the other hand, we have
\[
[b(t,x,D),q(t,x,D)]=r_2(t,x,D)
\]
for some symbol $r_2(t,\cdot)$ in a bounded subset of $\Gamma^{-(1-\delta)}_{1,\delta}$, continuous as a function of $t$ valued in $C^\infty(\rdd)$.\par
Summing up, we obtained
\begin{multline*}
(D_t+a(t,x,D)+b(t,x,D))q(t,x,D)u\\
=q(t,x,D)f+r_1(t,x,D)u+r_2(t,x,D)u+ r_3(t,x,D)u.
\end{multline*}
Now, by the symbolic calculus we see that $r_1(t,x,\xi)$ and $r_2(t,x,\xi)$ have Schwartz decay away from $\chi_t(V)$, because this holds for $q(t,x,\xi)$, whereas $r_3\in C([0,T];C^\infty_0(\rdd))$. Hence, assuming the claim above for the exponent $s-\epsilon_0$, with $\epsilon_0=\min\{\epsilon, 1-\delta\}$, we see that if $V$ is small enough, 
\[
r_1(t,x,D)u+r_2(t,x,D)u+r_3(t,x,D)u\in C([0,T];Q^s).
\]
Since $q(t,x,D)f\in C([0,T]);Q^s)$ too, we deduce from Theorem \ref{teo4} that
\[
q(t,x,D)u\in C([0,T];Q^s),
\]
which gives the desired claim. 
\end{proof}

We finally come to the propagation issue for the nonlinear equation. 
\begin{theorem}
Let $a(t,x,\xi)$ satisfy the assumptions {\bf (i)} and {$\bf (ii)^\prime$} and $F\in C^\infty(\bC)$, $F(0)=0$. Let $r,s>0$ and $u\in C([0,T];C^r\cap Q^s)$ be a solution of 
\begin{equation}\label{eqnon}
D_t u+a(t,x,D)u=F(u).
\end{equation}
Then, if $z_0\in\rdd\setminus\{0\}$, for every $s\leq \sigma<s+r$, 
\[
u(0)\in Q^\sigma\ \textit{at}\ z_0\Longrightarrow u(t)\in Q^\sigma\ \textit{at}\ \chi_t(z_0)
\]
for every $t\in(0,T]$. 
\end{theorem}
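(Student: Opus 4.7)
The plan is to paralinearize the nonlinearity and reduce \eqref{eqnon} to a linear Schr\"odinger-type equation (or system) to which Theorem \ref{teo5} applies. Fix $\sigma\in[s,s+r)$ and choose $\delta\in(0,1)$ close to $1$ and $\epsilon>0$ small so that $s-\epsilon+\delta r\ge\sigma$. Applying pointwise in $t\in[0,T]$ the decomposition developed in the proof of Theorem \ref{teo1}, in its complex-valued form (Remark \ref{remarkcomplex}), we write
\[
F(u(t))=G(t)+M_1^\sharp(t,x,D)u(t)+M_2^\sharp(t,x,D)\overline{u(t)},
\]
where $M_j^\sharp(t,\cdot)$ lies in a bounded subset of $\Gamma^0_{1,\delta}$, and $G\in C([0,T];Q^\sigma)$ collects $F(\phi_0(x,D)u)\in C([0,T];\cS)$ together with $M_1^b u$ and $M_2^b\overline{u}$, both in $C([0,T];Q^\sigma)$ by Proposition \ref{pro2} and the choice of $\delta,\epsilon$. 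Continuity of $M_j^\sharp(t,\cdot)$ in $t$ valued in $C^\infty(\rdd)$ follows from the $C([0,T];C^r\cap Q^s)$ regularity of $u$ via the estimates of Section 2. Moving the paralinearization to the left,
\[
D_t u+\bigl(a(t,x,D)-M_1^\sharp(t,x,D)\bigr)u-M_2^\sharp(t,x,D)\overline{u(t)}=G(t).
\]

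If $F$ and $u$ are real, $M_2^\sharp=0$. The operator $\tilde a:=a-M_1^\sharp$ satisfies \textbf{(i)} and $\textbf{(ii)}^\prime$ with the same real principal symbol $a_2$, and $-M_1^\sharp$ plays the role of $b$ in \textbf{(iii)}. Theorem \ref{teo5}, applied to the resulting linear Cauchy problem with source $G\in C([0,T];Q^\sigma)$ and initial datum $u(0)\in Q^\sigma$ at $z_0$, yields $u(t)\in Q^\sigma$ at $\chi_t(z_0)$ for every $t\in(0,T]$.

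In the complex case the term $M_2^\sharp\overline{u}$ is only globally in $C([0,T];Q^s)$, so a direct scalar application of Theorem \ref{teo5} fails. The remedy is to pass to the $2\times 2$ system for $(u,\overline{u})$: conjugating \eqref{eqnon} shows that $\overline{u}$ satisfies an analogous Schr\"odinger equation with real principal symbol $-a_2(x,-\xi)$ whose Hamiltonian flow equals $\mathrm{refl}\circ\chi_t\circ\mathrm{refl}$, where $\mathrm{refl}(x,\xi):=(x,-\xi)$. The pair $(u,\overline{u})$ then solves a first-order matrix Schr\"odinger system with real diagonal principal symbol, off-diagonal couplings in $\Gamma^0_{1,\delta}$, and source in $C([0,T];Q^\sigma)$. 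A matrix-valued version of Theorems \ref{teo4}--\ref{teo5}, obtained by repeating the same energy/commutator arguments (the sharp G\aa rding inequality still applies to the real diagonal principal symbol, and the bounded $0$-order off-diagonal blocks are absorbable as subprincipal perturbations), propagates the $Q^\sigma$-wavefront componentwise. Since $WF^\sigma_G(\overline{u(0)})=\mathrm{refl}(WF^\sigma_G(u(0)))$ and the conjugate flow sends $\mathrm{refl}(z_0)$ to $\mathrm{refl}(\chi_t(z_0))$, the single hypothesis $u(0)\in Q^\sigma$ at $z_0$ supplies the microlocal initial data for both components, yielding $u(t)\in Q^\sigma$ at $\chi_t(z_0)$.

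The main obstacle is the complex-valued case: one must extend the scalar linear machinery of Section 4 to the $2\times 2$ matrix setting with real diagonal principal symbol and bounded off-diagonal coupling. A secondary technical checkpoint is the $t$-continuity in $C^\infty(\rdd)$ of the paradifferential symbols $M_j^\sharp(t,\cdot)$, which is routine but must be verified through the lemmas of Section 2.
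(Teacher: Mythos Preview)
Your overall strategy coincides with the paper's: paralinearize $F(u)$ via the decomposition of Section~3 and feed the result into the linear propagation Theorem~\ref{teo5}. In the real-valued case your argument is essentially the paper's own proof.

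For complex $u$ the paper is extremely brief: it simply sets $b(t,x,D)=-M_1^\sharp-M_2^\sharp$ and invokes Theorem~\ref{teo5}, tacitly treating the anti-linear piece $u\mapsto M_2^\sharp(t,x,D)\overline{u}$ as an admissible zero-order perturbation. Your $2\times2$ system for $(u,\overline{u})$ is a natural attempt to make this step explicit, but the claim that the off-diagonal zero-order blocks are ``absorbable as subprincipal perturbations'' in a matrix version of Theorem~\ref{teo5} is where the argument does not go through. The two diagonal principal symbols $a_2(x,\xi)$ and $-a_2(x,-\xi)$ generate \emph{different} Hamiltonian flows, so in the commutator step one is forced to use a diagonal localizer $q=\mathrm{diag}(q_1,q_2)$ with $q_1$ supported near $\chi_t(z_0)$ and $q_2$ near $\mathrm{refl}(\chi_t(z_0))$. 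The $(1,2)$ entry of $[B,q]$ is then $b_{12}q_2-q_1b_{12}\in\Gamma^0_{1,\delta}$, which does \emph{not} gain the order $-(1-\delta)$ that the induction in Theorem~\ref{teo5} needs: the term $q_1 b_{12}\,\overline{u}$ is controlled only by the microlocal $Q^\sigma$ regularity of $\overline{u}$ near $\chi_t(z_0)$, i.e.\ of $u$ near $\mathrm{refl}(\chi_t(z_0))$, and the single hypothesis at $z_0$ gives no such information. Hence the matrix commutator argument does not close. The same obstruction in fact underlies the scalar route the paper takes: for real $q$ one has $\overline{q(x,D)u}=q(x,-D)\overline{u}$, so
\[
q(x,D)M_2^\sharp\overline{u}-M_2^\sharp\overline{q(x,D)u}=\bigl(q(x,D)M_2^\sharp-M_2^\sharp\,q(x,-D)\bigr)\overline{u},
\]
which is again of order $0$. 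In either presentation an additional idea is needed to handle the anti-linear coupling; your proposal correctly isolates the difficulty but does not yet resolve it.
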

\begin{proof}
As in the proof of Theorem \ref{teo1} we write 
\begin{align*}
F(u(t))=F(\phi_0(x,D)u(t))+&M_1^\sharp(t,x,D) u(t)+M_1^b(t,x,D)u(t)\\
+&M_2^\sharp(t,x,D) \overline{u(t)}+M_2^b(t,x,D)\overline{u(t)},
\end{align*}
where now $F(\phi_0(x,D)u(t))\in C([0,T];\cS(\rd))$, whereas $M_j^\sharp(t,x,\xi)$, $j=1,2$ and $M^b_j(t,x,\xi)$, $j=1,2$, lie in bounded subsets of $\Gamma^0_{1,\delta}$ and $\Gamma^{-\delta r}_{1,1}$, respectively, and are continuous as functions of $t$ with values in $C^\infty(\rdd)$.\par
Moreover by Proposition \ref{pro2} we have
\[
f(t):=M_1^b(t,x,D)u(t)+M_2^b(t,x,D)u(t)+F(\phi_0(x,D)u(t))\in C([0,T];Q^{s+\delta r-\epsilon_0})
\]
for every $\epsilon_0>0$. Hence, it is then sufficient to choose $\epsilon_0$ small enough and $\delta$ sufficiently close to $1$ so as $\sigma\leq s+\delta r-\epsilon_0$ and apply Theorem \ref{teo5} with $b(t,x,D)=-M_1^\sharp(t,x,D)-M_2^\sharp(t,x,D)$ and $f$ as above. 
\end{proof}

Using the inclusions $Q^s\subset H^s\subset C^r$ for $r<s-d/2$, we obtain the following result.
\begin{corollary}
Let $a(t,x,\xi)$ satisfy the assumptions {\bf (i)} and {$\bf (ii)^\prime$} and $F\in C^\infty(\bC)$, $F(0)=0$. Let $d/2<s\leq\sigma<2s-d/2$ and $u\in C([0,T];Q^s)$ be a solution of \eqref{eqnon}.\par
Then, if $z_0\in\rdd\setminus\{0\}$,
\[
u(0)\in Q^\sigma\ \textit{at}\ z_0\Longrightarrow u(t)\in Q^\sigma\ \textit{at}\ \chi_t(z_0)
\]
for every $t\in(0,T]$. 
\end{corollary}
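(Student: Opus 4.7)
The plan is to deduce the corollary from the preceding theorem by manufacturing the Hölder regularity that the theorem demands from the given Sobolev regularity, using the condition $s>d/2$. The theorem assumes $u\in C([0,T];C^r\cap Q^s)$ with $r,s>0$ and $s\leq\sigma<s+r$, whereas here we have only $u\in C([0,T];Q^s)$ and the wider window $s\leq\sigma<2s-d/2$; bridging the two is a matter of picking $r$ carefully and then invoking a Sobolev embedding.

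First, I would set up the choice of $r$. Given $\sigma$ in the range $s\leq\sigma<2s-d/2$, we have $\sigma-s<s-d/2$, so the interval $(\max\{0,\sigma-s\},\,s-d/2)$ is nonempty (using $s>d/2$). Pick any $r$ in this interval: then $r>0$ and $s\leq\sigma<s+r$, which matches the hypothesis of the theorem.

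Second, I would invoke the Sobolev embedding. By the very definition $Q^s\subset H^s$, and since $s-d/2>r$ the classical embedding gives $H^s(\rd)\hookrightarrow C^r(\rd)$. Composing, $u\in C([0,T];Q^s)\hookrightarrow C([0,T];H^s)\hookrightarrow C([0,T];C^r)$, and hence $u\in C([0,T];C^r\cap Q^s)$ with the same $s$ and the just-chosen $r$.

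Third, I would apply the previous theorem to this $u$ with this $r$. Its conclusion $u(0)\in Q^\sigma\text{ at }z_0\Longrightarrow u(t)\in Q^\sigma\text{ at }\chi_t(z_0)$ for $t\in(0,T]$ is exactly the statement of the corollary. There is no real obstacle here: the entire content is the observation that $s>d/2$ makes Sobolev embedding into some nontrivial $C^r$ available, and that the arithmetic $\sigma<s+r$ with $r<s-d/2$ is compatible precisely when $\sigma<2s-d/2$, which is the stated range.
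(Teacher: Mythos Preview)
Your argument is correct and is essentially the same as the paper's: the corollary is derived from the preceding theorem via the inclusions $Q^s\subset H^s\subset C^r$ for $r<s-d/2$, choosing $r$ so that $\sigma<s+r$. You have merely made explicit the interval for $r$ that the paper's one-line remark leaves implicit.
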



\begin{thebibliography}{10}
\bibitem{bony1} S. Alinhac. Paracomposition et op\'eraturs paradiff\'erentiels, {\it Comm. Partial Differential Equations} 11:87--121, 1986.

\bibitem{bony8} J.-M. Bony. Calcul symbolique et propagation des singularit\'es pour les \'equations aux d\'eriv\'ees partielles non lin\'eaires, {\it Ann. Sci. Ec. Norm. Sup.} ($4$\`eme s\'erie) 14:209--246, 1981. 

\bibitem{bourgain} J. Bourgain. {\it Global solutions of nonlinear Schr\"odinger equations}. Amer. Math. Soc., Colloquium Publications 46, 1999. 

\bibitem{hassel-wunsch} A. Hassell and J. Wunsch. The Schr\"odinger propagator for scattering metrics, {\it Ann. of Math.} 162:487--523, 2005.

\bibitem{hormander} L.\ H{\"o}rmander. {\it The analysis of linear partial differential
operators, III}. Springer-Verlag, 1985.

\bibitem{hormander0} L. H\"ormander. Quadratic hyperbolic operators. In ``{\it Microlocal analysis and applications}'' (Montecatini Terme, 1989), 118--160, Lecture Notes in Math., 1495, Springer, Berlin, 1991.

\bibitem{ito} K. Ito. Propagation of singularities for Schr\"odinger equations on the Euclidean space with a scattering metric, {\it Comm. Partial Differential Equations} 31:1735--1777, 2006.

\bibitem{mns} A. Martinez, S. Nakamura and V. Sordoni. Analytic smoothing effect for the Schr\"odinger equation with long-range perturbation, {\it Comm. Pure Appl. Math.} 59:1330--1351, 2006.

\bibitem{mizuhara} R. Mizuhara. Microlocal smoothing effect for the Schr\"odinger evolution equation in a Gevrey class, {\it J. Math. Pures Appl.} 91:115--136, 2009.

\bibitem{nakamura} S. Nakamura. Propagation of the homogenous wave front set for Schr\"odinger equations, {\it Duke Math. J.} 126:349--367, 2005.

\bibitem{nicolarodino}
F. Nicola and L. Rodino. {\it Global pseudodifferential calculus on Euclidean spaces}. Birk\"auser, Basel, 2010.

\bibitem{bony42} J. Rauch and M. Reed. Nonlinear microlocal analysis of semilinear hyperbolic systems in one space dimension, {\it Duke Math. J.} 49:397--475, 1982.

\bibitem{rw} L. Rodino and P. Wahlberg. The Gabor wave front set, {\it Monatsh. Math}. 173:625--655, 2014. 

\bibitem{shubin}
M.A. Shubin. {\it Pseudodifferential operators and spectral theory}. Springer-Verlag, Berlin, 1987. 

\bibitem{tao} T. Tao. {\it Nonlinear dispersive equations}. CBMS Series, Amer. Math. Soc., 2006.

\bibitem{taylor1}
M. Taylor. {\it Pseudodifferential operators and nonlinear PDE}. Birkh\"auser, Basel, 1991.

\bibitem{taylor2}  
M. Taylor. {\it Partial Differential Equations. Nonlinear Equations, III}. Springer-Verlag, New York, 1996. 

\bibitem{wunsch} J. Wunsch. Propagation of singularities and growth for Schr\"odinger operators, {\it Duke Math. J.} 98:137--186, 1999.

\end{thebibliography}
\end{document}